\newtheorem{Thm}{Theorem}
\newtheorem{MThm}[Thm]{Main Theorem}
\newtheorem{Def}[Thm]{Definition}
\newtheorem{Lem}[Thm]{Lemma}
\newtheorem{Rem}[Thm]{Remark}
\newtheorem{Claim}[Thm]{Claim}
\begin{document}

\title{Meridional almost normal surfaces in knot complements}
\author{Robin T. Wilson}

\address{Department of Mathematics and Statistics\\
California State Polytechnic University\\
Pomona, CA 91768}
\email{robinwilson@csupomona.edu}

\subjclass{Primary 57M}
\keywords{bridge surface, Heegaard splitting, almost normal surface}

\thanks{Research supported by UC President's Postdoctoral Fellowship Program and Department of Mathematics at the University of California, Santa Barbara}

\begin{abstract}
Suppose $K$ is a knot in a closed 3-manifold $M$ such that $\overline{M-N(K)}$ is irreducible.  We show that  for any integer $b$ there exists a triangulation of $\overline{M-N(K)}$ such that any weakly incompressible bridge surface for $K$ of $b$ bridges or fewer is isotopic to an almost normal bridge surface. 
\end{abstract}

\maketitle

\section{Introduction}
\label{intro}

It was shown independently by M. Stocking \cite{St} and J. H. Rubinstein \cite{Ru} that any strongly irreducible Heegaard splitting for an irreducible 3-manifold is isotopic to an almost normal surface.  Also see \cite{K} for another proof of this result. The concept of a bridge surface for a knot complement is analogous to the idea of a Heegaard surface for a 3-manifold in that the bridge surface is a splitting surface that separates the knot complement into two equivalent and fairly elementary submanifolds.  In addition, the fact that a bridge surface lifts to a Heegaard surface in the 2-fold branched cover of a knot complement gives another important connection between bridge surfaces for knot complements and Heegaard surfaces for 3-manifolds. 

 In the study of bridge surfaces for knots and links the idea of a weakly incompressible splitting surface is immediately analogous to the idea of a strongly irreducible Heegaard surface for a 3-manifold.  In this paper we prove an analog to the main theorem of \cite{St} and Theorem 3 in \cite{Ru}.  We show that any weakly incompressible bridge surface in a 3-manifold is isotopic to an almost normal bridge surface.  
 
 \begin{MThm}
\label{almostnormal}
Let $K$ be a knot in a closed, orientable, irreducible 3-manifold $M$.  Let $N(K)$ be a regular neighborhood of $K$ and suppose that the closure of the complement $M_K=\overline{M-N(K)}$ is irreducible.  Then for any integer $b$ there is a triangulation $\mathcal{T}$ of $M_K$ such that if $S$ is a bridge surface for $K$ of $b$ bridges or fewer that gives an irreducible Heegaard splitting of $M$ and $S_K=\overline{S-N(K)}$ is weakly incompressible, then $S_K$ is properly isotopic in $M_K$ to an almost normal surface with respect to $\mathcal{T}$.
\end{MThm}

The proof will be similar in spirit to that of \cite{St} but the proof here fills in a few missing cases and simplifies the argument by making greater use of edge slides.  A closely related result was proved by David Bachman in \cite{B}, however it requires additional hypotheses.  In Section 2 we briefly introduce some definitions and notation.  The proof of the main theorem is contained in Section 3.  
 
 This research was done while under the support of the UC President's Postdoctoral Fellowship Program and the Department of Mathematics at UC Santa Barbara.  I would like to thank Martin Scharlemann for all of the helpful conversations and many valuable comments, as well as Scott Taylor for his insightful comments about the proof of Lemma \ref{Lemma4}.

\section{Preliminaries}
\label{definitions}

\textbf{Notation}:  If $K$ is a properly embedded 1-manifold in a 3-manifold $M$ then let $M_K=\overline{M-N(K)}$.  If $X$ is any surface in $M$ transverse to $K$ such that $K\cap X\neq \emptyset$, then let $X_K=M_K\cap X$.  For $\mathcal{T}$ a triangulation of a 3-manifold $M$, let $\mathcal{T}_{\partial M}$ denote the restriction of $\mathcal{T}$ to $\partial M$.  \\

The following definition is from \cite{To} and is based on the definition of a $K$-compression body given in \cite{Ba}. 

\begin{Def}[\cite{To}]\textup{
A properly embedded arc $K$ in a 3-manifold $M$ is \textit{boundary parallel} if there is a disk $D$ in the 3-manifold so that $\partial D$ is the end point union of $K$ and an arc in $\partial M$.  The disk $D$ is called a \textit{cancelling disk} for $K$.  A \textit{$K$-handlebody} $(A,K)$ is a handlebody $A$ containing a finite collection of boundary parallel arcs $K$.  When there is little risk of confusion we will also refer to $A_K=\overline{A-N(K)}$ as a $K$-handlebody.  For our purposes, a \textit{$K$-compression body} (W,K) is a compression body $W$ containing a finite collection of arcs $K$ properly embedded in $W$ such that each arc has one end on each of $\partial_{+}W$ and $\partial_{-}W$ and each arc is vertical in the product region $\partial_{-}W \times I \subset W$. 
}\end{Def}  

\begin{Rem}\textup{
Two sets $K$ and $K'$ of boundary parallel arcs in a handlebody $A$ or vertical arcs in a compression body are properly isotopic in $A$ if they have the same cardinality, i.e., $|K|=|K'|$.
}\end{Rem}

\begin{Def}\textup{
A \textit{spine} of a handlebody $A$ is a graph $\Sigma_A$ properly embedded in $A$ such that $A-\Sigma_A$ is a product $\partial A \times I$.  Let $A_K$ be a $K$-handlebody and suppose that $\Sigma_A$ is a spine for handlebody $A$ and $K$ is a collection of boundary parallel arcs.  Let $\alpha$ be a collection of $|K|$ arcs, each connecting $\Sigma_A$ to a single arc of $K$.  Then a regular neighborhood $A'=N(\Sigma \cup \alpha)$ is again a handlebody, and $K$ intersects it in a boundary parallel set of arcs $K'\subset A'$.  If the closure of the region $(A-A')_{(K-K')}$ between them is a product $\partial A_K \times I$ then $\Sigma_{(A,K)}=\Sigma_A \cup \alpha$ is called a \textit{spine of the $K$-handlebody} $(A,K)$.  A spine $\Sigma_W$ of a compression body $W$ is the union of $\partial_{-}W$ and a properly embedded graph such that $W-\Sigma_W$ is a product $\partial W \times I$.    If $(W,K)$ is a $K$-compression body then a spine for $W_K$ will mean a spine for $W$ that is disjoint from $K$.  
}\end{Def}

\begin{Rem}\textup{
It is relatively easy to find such a spine for a $K$-handlebody or $K$-compression body $A_K$.  Choose a spine for $\Sigma$ of handlebody (compression body) $A$ and isotope $K$ so that in the product structure $A-N(\Sigma)=\partial A \times I$, each (boundary parallel) arc of $K$ has a single maximum.  Let $\alpha$ ($\beta$) be a collection of vertical arcs in this product structure, connecting each maximum of $K$ to $\Sigma$.  
}\end{Rem}

\begin{Def}[\cite{STo}]\textup{
Let $K$ be a knot in a closed, orientable 3-manifold $M$ and let $S$ be a Heegaard surface for $M$.  That is, $M=W \cup_{S} W'$, where $W$ and $W'$ are handlebodies in $M$.  If in addition, $W_K$ and $W'_K$ are $K$-handlebodies then we call $S$ a \textit{bridge surface} for $M_K$.  (We will often abuse notation and call the punctured surface $S_K$ a bridge surface as well.)  We call the decomposition $M_K=W_K\cup_{S_K}W_{K}'$ a \textit{bridge splitting} of the 3-manifold $M_K$ and we say that $K$ is in \textit{bridge position} with respect to bridge surface $S$.
}\end{Def} 

\begin{Def}[\cite{STo}]\textup{
Let $K$ be a 1-manifold embedded in $M$ and suppose that $F$ is a properly embedded surface in $M$ so that $F$ is transverse to $K$. A simple closed curve on $F_K$ is \textit{essential} if it doesn't bound a disk or a once punctured disk in $F_K$.  An embedded disk $D \subset M_K$ is a \textit{compressing disk} for a surface $F_K$ if $D \cap F_K=\partial D$ and $\partial D$ is an essential curve in $F$.  A surface $F$ in $M$ is a \textit{splitting surface} for $M$ if we can express $M$ as the union of two 3-manifolds along $F$.  If $F$ is a splitting surface for $M$ then we say that the surface $F_K$ is \textit{weakly incompressible} if any pair of compressing disks on opposite sides of the surface intersect.  If $F_K$ compresses on both sides but is not weakly incompressible then it is called \textit{strongly compressible}. 
}\end{Def}

The study of normal surfaces was first developed by Haken \cite{H}.  The concept of an almost normal surface that is used in this paper first appeared in \cite{Ru}.  

\begin{Def}\textup{
Let $S$ be a triangulated surface and let $c$ be a curve on  $S$.  Assume that $c$ is transverse to the 1-skeleton of the triangulation.  A curve $c$ in $S$ is called \textit{normal} if the intersection of $c$ with any triangle of the triangulation contains no closed curves and no arcs with both endpoints on the same edge.
}\end{Def}

\begin{Def}[\cite{H}]\textup{
Let $M$ be a triangulated 3-manifold.  A \textit{normal triangle} in a tetrahedron of the triangulation is an embedded disk that meets three edges and three faces of the tetrahedron.  A  \textit{normal quadrilateral} is an embedded disk in a tetrahedron that meets four edges and four faces of the tetrahedron.  Normal triangles and quadrilaterals are called  \textit{normal disks}.  Normal disks meet the faces of the boundary of a tetrahedron in normal arcs. 
}\end{Def}  

\begin{Def}[\cite{Ru}]\textup{
Let $M$ be a triangulated 3-manifold.  An embedded surface $S\subset M$ is a \textit{normal surface} if it meets each tetrahedron in a disjoint collection of normal disks.  A surface $S$ is \textit{almost normal} if $S$ meets each tetrahedron of the triangulation in a collection of normal disks, but in one tetrahedron there is exactly one exceptional piece.  This exceptional piece is either a normal octagon, or it is an annulus consisting of two normal disks with a tube between them that is parallel to an edge of the 1-skeleton.
}\end{Def}

The proof of the main theorem relies heavily on the idea of thin position, first introduced by Gabai \cite{G}.

 \begin{Def}[\cite{St}]\textup{
 Let $M_K=W_K  \cup_{S_K} W'_K$ denote a bridge splitting of $M_K$.  Given spines $\Sigma_{(W,K)}$ and $\Sigma_{(W',K)}$ for the $K$-handlebodies $(W,K)$ and $(W',K)$ respectively, there is a diffeomorphism $S_K \times (0,1) \simeq M_K-N(\Sigma_{(W,K)} \cup \Sigma_{(W',K)})$.  For $t\in(0,1)$ denote the surface corresponding to $S_K \times \{t\}$ by $S_t \subset M_K$.  A \textit{standard singular foliation} $F$ of $M_K=W_K  \cup_{S_K} W'_K$ extends this structure to all of $M_K$ by adding two singular leaves $S_0$ and $S_1$, called the top and bottom leaves.  All leaves meet the the torus $\partial M_K$ in the standard foliation in meridian circles.  The top and bottom singular leaves consist of the union of the spines of the $K$-handlebodies $W_K$ and $W'_K$ respectively, and the meridian circle of $\partial M_K$ corresponding to each of the $n$ endpoints of $\Sigma_{(W,K)}$ and $\Sigma_{(W',K)}$.  There is a height function $h:M \rightarrow [0,1]$ associated with the standard singular foliation given by the map that sends all points on a leaf $S_t$ together with the incident meridian disks of $N(K)$ to the point $t$ in $[0,1]$.
}\end{Def}

 \begin{Def}[\cite{Th}, \cite{St}]\textup{
 Assume that $T$ is a collection of arcs properly embedded in $M_K$ and is in general position with respect to $F$, a standard singular foliation of $M_K$.  That is, all but a finite number of leaves of $F$ intersect $T$ transversally, every leaf in $F$ has at most one point of tangency with $T$, and $T$ is disjoint from the singular subarcs of the singular leaf.  If a leaf has a point of tangency with $T$ call it a \textit{tangent leaf} and all other leaves \textit{transverse leaves}.  Between each two adjacent tangent leaves choose a transverse leaf $L_i$.  Define the \textit{width of a fixed embedding of $T$ with respect to $F$} to be the sum over $i$ of (the number of times $T$ intersects $L_i$).  If $T$ has been properly isotoped to minimize its with with respect to $F$ then we say that $T$ is in \textit{thin position} with respect to $F$.    
}\end{Def}

\begin{Def}[\cite{Th}, \cite{St}]\textup{
Let $T$ be in thin position with respect to standard singular foliation $F$.  Then as we move down the foliation from the top the arcs will form a sequence of maxima with respect to $F$, then a sequence of minima, and so on.  We will call a leaf in a region where the sequence shifts from maxima to minima a \textit{thick leaf} and we will call such a region a \textit{thick region}.  An \textit{upper (lower) disk} $D$ for a transverse leaf $L$ of $F$ is a disk in $int(M)-T$ such that $\partial D= \alpha \cup \beta$ where $\alpha$ is an arc embedded in $L$, $\beta$ is a subarc of $T$, $\partial \alpha =\partial \beta$, $D-\alpha$ intersects $L$ transversely, and a small neighborhood of $\alpha$ lies above (below) $L$.   
}\end{Def}

For the proof of the main theorem we will need the following Lemmas.  The first theorem is proved by Stocking in \cite{St}.

\begin{Lem}[Lemma 1, \cite{St}]
\label{normal}
Let $S$ be an almost normal surface in an irreducible 3-manifold.  Suppose that $S$ is incompressible to one side.  Then $S$ is isotopic to a normal surface that does not intersect $S$ and that does not contain $S$ to the incompressible side. 
\end{Lem}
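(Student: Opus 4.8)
The plan is to follow Stocking's argument: remove the single exceptional piece of $S$ by an isotopy supported in a neighbourhood of that piece, using incompressibility on one side to control the direction of the move and using irreducibility of $M$ to dispose of trivial spheres. Write $X$ for the incompressible side of $S$ and $Y$ for the other side; there are two cases according to the type of exceptional piece. First I would treat the tube case: the exceptional piece is an annulus $E_1\cup T\cup E_2$, with $E_1,E_2$ normal disks and $T$ a tube parallel to an edge $e$. Let $D$ be a meridian disk of $T$, that is, a disk with $\partial D$ a core circle of the annulus $T$ and interior disjoint from $S$; then $\partial D\subset S$, $D\cap S=\partial D$, and the interior of $D$ lies on one definite side $Z\in\{X,Y\}$ of $S$, namely the side occupied by the solid tube hugging $e$. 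If $\partial D$ is essential in $S$, then $D$ is a compressing disk, so $Z=Y$ because $S$ is incompressible toward $X$; compressing $S$ along $D$ cuts $T$ and restores $E_1$ and $E_2$ as disjoint normal disks (being pieces of an embedded surface, $E_1$ and $E_2$ cannot be normal disks of incompatible type in a common tetrahedron), so the result $S'$ is normal, lies on the compressible side of $S$, and after a small pushoff is disjoint from $S$ and does not contain $S$ to the incompressible side. If instead $\partial D$ bounds a disk in $S$, then that disk together with $D$ bounds a ball by irreducibility of $M$; isotoping $S$ across this ball deletes the tube (the finger-like remnant along $e$ retracts into its tetrahedron to a single normal disk), again producing a normal surface disjoint from $S$, and here one checks directly --- appealing to irreducibility a second time if necessary --- that the move does not carry $S$ to the incompressible side.

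Then I would treat the octagon case: the exceptional piece is a normal octagon $O$ in a tetrahedron $\Delta$. Unlike the tube, $O$ carries no obvious compressing disk; instead one exploits that $O$ meets one pair of opposite edges of $\Delta$ twice each. Fix such an edge $e$, with $O\cap e=\{p,q\}$, and let $\gamma\subset e$ be the subarc between $p$ and $q$ lying on one side of $O$ in $\Delta$; an arc $\delta\subset O$ from $p$ to $q$ completes a disk $D$ properly embedded in the ball $\Delta\setminus O$ on the $\gamma$-side, with $D\cap S=\delta$ and $D\cap e=\gamma$. Using $D$ to isotope $S$ across the edge $e$ cancels $p$ and $q$, strictly lowering the number of intersections of $S$ with the $1$-skeleton near $\Delta$; after a further small isotopy re-normalising the slightly disturbed neighbouring tetrahedra, the octagon is replaced either directly by normal disks or by a tube-and-two-disks exceptional piece, which returns us to the first case.

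The step I expect to be the main obstacle is the octagon case, precisely because the simplifying move there is an edge isotopy rather than a compression: one must verify that it genuinely eliminates the exceptional piece (rather than merely relocating an octagon to an adjacent tetrahedron) and that the disturbed tetrahedra can be put back into normal form without creating a new exceptional piece. This is where the hypothesis that $S$ is incompressible to one side does its real work; it excludes the configurations in which the edge isotopy would get stuck or would have to be traded for an essential compression on the forbidden side $X$. Some such hypothesis is unavoidable, since a strongly compressible almost normal surface --- for example a strongly irreducible Heegaard surface rendered almost normal --- admits no normalisation at all. Once the exceptional piece has been removed, the surface is normal after a single isotopy, and the side condition in the conclusion is read off from the direction in which the move was performed, so no iteration is required.
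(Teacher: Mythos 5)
You should first note that the paper does not prove this statement at all: it is imported verbatim as Lemma~1 of \cite{St}, so your argument has to be measured against Stocking's. Your overall plan (undo the exceptional piece by a move supported near it, using irreducibility to dispose of trivial spheres) is the right starting point, but two steps fail. The decisive one is in the tube case: when $\partial D$ is essential you \emph{compress} $S$ along $D$. Compressing along an essential disk changes the Euler characteristic, hence the isotopy class, of $S$, whereas the conclusion demands a normal surface \emph{isotopic} to $S$. The hypotheses are used in exactly the opposite way: no genuine compression is ever performed. Every simplifying move must be an isotopy, and the whole point of ``incompressible to one side'' together with irreducibility is to convert each would-be compression on the chosen side into an isotopy (the boundary of the disk bounds a disk in $S$, the resulting sphere bounds a ball, and the ball guides the isotopy). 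When the meridian disk of the tube is essential it therefore lies on the compressible side and cannot be used at all; the exceptional piece must instead be removed by an isotopy toward the incompressible side, which is what the parallelism of the tube to an edge of $\mathcal{T}^1$ provides. In your octagon case you likewise never invoke the hypothesis to choose the direction of the move: the octagon meets two edges twice, the two resulting edge-compression disks lie on opposite sides of $S$, and one must take the one on the incompressible side.

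Second, your closing claims that ``the surface is normal after a single isotopy'' and ``no iteration is required'' discard the main body of the proof. The edge isotopy destroys normality in every tetrahedron incident to that edge (it creates arcs returning to a single edge, circles in faces, and non-disk components in tetrahedra), and restoring normality is Haken's full normalization procedure, not a ``small isotopy re-normalising the slightly disturbed neighbouring tetrahedra.'' Each normalization step is either an isotopy or calls for a compression; incompressibility to the chosen side plus irreducibility again turn the compressions into isotopies and kill any sphere components that split off; and the process terminates because the weight --- the number of points of $S\cap\mathcal{T}^1$ --- strictly decreases at the initial move and never increases afterwards. Only this iteration produces a normal surface isotopic to $S$, swept entirely into the incompressible side, hence disjoint from $S$ and not containing $S$ on that side. (A side remark: your claimed necessity example is self-contradictory, since a strongly irreducible Heegaard surface is by definition weakly incompressible, not strongly compressible.)
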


A version of the following theorem was proved for strongly irreducible Heegaard surfaces by Casson and Gordon in \cite{CG} and has been adapted to the situation of weakly incompressible bridge surfaces by Tomova in \cite{To}.

\begin{Lem}[Corollary 6.3, \cite{To}]
\label{Lemma6}
Let $K$ be a knot in a closed, orientable, irreducible 3-manifold $M$.  Let $S_K$ be a weakly incompressible splitting surface for $M_K$ and let $S'_K$ be a surface that is obtained from $S_K$ by compressing $S_K$ to one side.  Then $S'_K$ is incompressible to the other side.
\end{Lem}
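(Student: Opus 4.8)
The plan is to run the standard Casson--Gordon exchange argument, with attention to the punctures. Write the splitting as $M_K = W_K \cup_{S_K} W'_K$ and suppose, without loss of generality, that $S'_K$ is obtained by compressing $S_K$ into $W_K$; I must show $S'_K$ has no compressing disk on the $W'_K$ side. Realize the compression by a collection $\Delta = D_1 \cup \cdots \cup D_n \subset W_K$ of pairwise disjoint compressing disks for $S_K$, so that $S'_K$ is $S_K$ surgered along $\Delta$. With $Y = \overline{W_K - N(\Delta)}$ and $X = \overline{M_K - Y}$, the surface $S'_K$ separates $X$ from $Y$, and $X$ is obtained from $W'_K$ by attaching the $2$-handles $H_i = D_i \times I$ along annular neighborhoods of the curves $\partial D_i \subset S_K$, with $S'_K = \partial_+ X$ the surgered copy of $S_K$. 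Suppose, for contradiction, that $E$ is a compressing disk for $S'_K$ with $E \subset X$ and $\partial E$ essential in $S'_K$.

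First I would push $E$ off the $2$-handles. One may isotope $\partial E$ in $S'_K$ off the disks $D_i \times \partial I$ (the surgered parts of $S'_K$), since $\partial E$, being essential in $S'_K$, cannot be contained in one of them; then the standard innermost-disk argument, using irreducibility of $M$ to fill in $2$-spheres cut off by disks of $E \cap H_i$, makes $E$ disjoint from all of the $H_i$. Now $E$ lies in $X - \bigcup_i H_i$, which is a copy of $W'_K$, with $\partial E \subset S_K$ and $\partial E$ disjoint from $\partial \Delta$. Next I claim $\partial E$ is essential in $S_K$. If not, $\partial E$ bounds a disk or once-punctured disk $\delta$ in $S_K$; but any simple closed curve in $\delta$ bounds a disk or once-punctured disk inside $\delta$, hence in $S_K$, whereas each $\partial D_i$ is essential in $S_K$, so no $\partial D_i$ lies in $\delta$; then $\delta$ is left intact by the surgery and is a disk or once-punctured disk in $S'_K$ bounded by $\partial E$, contradicting that $\partial E$ is essential in $S'_K$. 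Therefore $E \subset W'_K$ and $D_1 \subset W_K$ are compressing disks for $S_K$ on opposite sides with $\partial E \cap \partial D_1 = \emptyset$, contradicting the weak incompressibility of $S_K$. This proves the lemma.

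The argument is essentially routine---this is Corollary 6.3 of \cite{To}, and one could simply cite it; I would record the short proof only to have the statement in the precise form needed for the Main Theorem. There is no genuinely hard step. The points that deserve a moment's care are the innermost-disk reductions used to pull $E$ off the handles (in particular, if $N(K)$ lies inside one of the intermediate $3$-balls, one isotopes $E$ across the complementary ball, which is contained in $M_K$) and the verification that $\partial E$ stays essential when moved from $S'_K$ to $S_K$, which the meridian punctures do not affect. The single hypothesis genuinely exploited is weak incompressibility, entering only at the final line.
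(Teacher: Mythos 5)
The paper does not actually prove this statement; it is quoted verbatim as Corollary 6.3 of Tomova's paper (itself an adaptation of Casson--Gordon), so there is no internal proof to compare against --- I can only assess your argument on its own terms. Your framing is the right one (the Casson--Gordon exchange argument), and the two places where you explicitly slow down are handled correctly: isotoping $\partial E$ off the scar disks, and the verification that $\partial E$ remains essential when viewed in $S_K$, which is indeed the only point where the meridional punctures could interfere.

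The gap is in the sentence ``the standard innermost-disk argument, using irreducibility of $M$ to fill in $2$-spheres cut off by disks of $E\cap H_i$, makes $E$ disjoint from all of the $H_i$.'' After $\partial E$ is pushed off the scars, $E\cap \partial H_i$ consists of closed curves on the frontier annulus $A_i=\partial D_i\times I$, and only those curves that are \emph{inessential} in $A_i$ cut off spheres that irreducibility can remove. A curve that is a core of $A_i$ bounds an innermost subdisk $E_0\subset E$ lying either in $W'_K$ or in $H_i$. In the first case you win immediately ($E_0$ and $D_i$ are disjoint compressing disks on opposite sides of $S_K$), but not by removing the intersection. In the second case $E_0$ is a meridian disk of the handle, parallel to $D_i$; it is a compressing disk for $S_K$ on the \emph{same} side as $\Delta$, so weak incompressibility gives nothing, and it cannot be isotoped out of $H_i$ within $X$ --- the only parallelism pushes it through the scar $D_i\times\partial I\subset S'_K$, i.e.\ out of $X$. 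So the configuration ``$E$ meets the handles in meridian disks'' survives your reduction, and it is exactly the configuration where the published proofs do their real work: one minimizes $|E\cap S_K|$, observes that the planar piece of $E$ in $W'_K$ containing $\partial E$ must then be incompressible in $W'_K$, and extracts the contradiction from a boundary-compression of that piece (or argues by induction on the number of compressing disks, as Tomova does). Your proof as written skips this step entirely; either supply that argument or, as the paper does, simply cite \cite{To}.
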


\section{Almost normal bridge surfaces}
\label{main}

The proof of the main theorem follows from an application of ideas from \cite{St} where the Heegaard surface is replaced with a bridge surface.  An important difference between the two arguments is that the leaves of the foliations in this case are surfaces with boundary as opposed to closed surfaces.  The argument relies heavily on Lemmas \ref{Lemma5}, \ref{Lemma4}, and \ref{Lemma3} whose statements are close to that of Lemmas 4 and 5 in \cite{St}.  The proofs of these two lemmas are similar in spirit to the originals, but differ in detail.  Several cases missed in the original proof in \cite{St} are included here, more extensive use is made of edgeslides, and the arguments have been adapted to our situation. 

\begin{proof}[Proof of Main Theorem]

Let $K$ be a knot in a closed, orientable 3-manifold $M$ and assume that $M$ and $M_K$ are both irreducible.  Suppose that the knot $K$ is in $n$-bridge position with bridge surface $S$ so that $M=W \cup_{S} W'$ is an irreducible Heegaard splitting of $M$ and the punctured surface $S_K$ is weakly incompressible.  Suppose that $S_K$ separates $M_K$ into the two $K$-handlebodies $W_K$ and $W'_K$.  Let $M_K=W_K \cup_{S_K} W'_K$ denote the bridge splitting of $M$ by $S$.  We can foliate $M_K=W_K \cup_{S_K} W'_K$ with a standard singular foliation that intersects the torus $\partial M_K$ in meridian circles. The top singular leaf of the foliation, $L_{top}$, is a 1-complex given by the union of a spine $\Sigma_{(W,K)}$ of $W_K$ and one meridian circle of $\partial M_K$ for each of the $n$ endpoints of $\Sigma_{(W,K)}$ on $K$.  Similarly, the bottom singular leaf of the foliation, $L_{bot}$, is a 1-complex given by the union of a spine $\Sigma_{(W',K)}$ of $W'_K$ and one meridian circle of $\partial M_K$ for each of the $n$ endpoints of $\Sigma_{(W',K)}$ on $K$.  Thus there is a symmetric picture near the top and bottom leaves of the foliation.  

Consider a nearby leaf of the frontier of a regular neighborhood of $L_{top}$ (resp. $L_{bot}$) in $M_K$.  It can be viewed as consisting of two parts.  The first is a collection $\Gamma^{top}$ (resp. $\Gamma^{bot}$) of $n$ boundary parallel annuli.  Secondly, these annuli are tubed together via the boundary $t^{top}$ (resp. $t^{bot}$) of a regular neighborhood of $\Sigma_{(W,K)}$ (resp. $\Sigma_{(W',K)}$).  Topologically $\Gamma^{top}$ (resp. $\Gamma^{bot}$) consists of $n$ once-punctured annuli and $t^{top}$ (resp. $t^{bot}$) consists of an $n$-punctured copy of $S_K$.  Since $t^{top}$ and $t^{bot}$ arise as the boundary of a regular neighborhood of a 1-complex it is natural to refer to them as collections of ``tubes''.  We will refer to the pair of annuli $\Gamma^{top}$ and $\Gamma^{bot}$ as $\Gamma$.  See Figure \ref{trefoil}.  Throughout the paper the foliation we refer to will always be the standard product foliation of $S^1 \times I$ on each component between the top and bottom annuli on $\partial M_K$. 

\begin{figure}
  \begin{center}
  \includegraphics[width=1.75in]{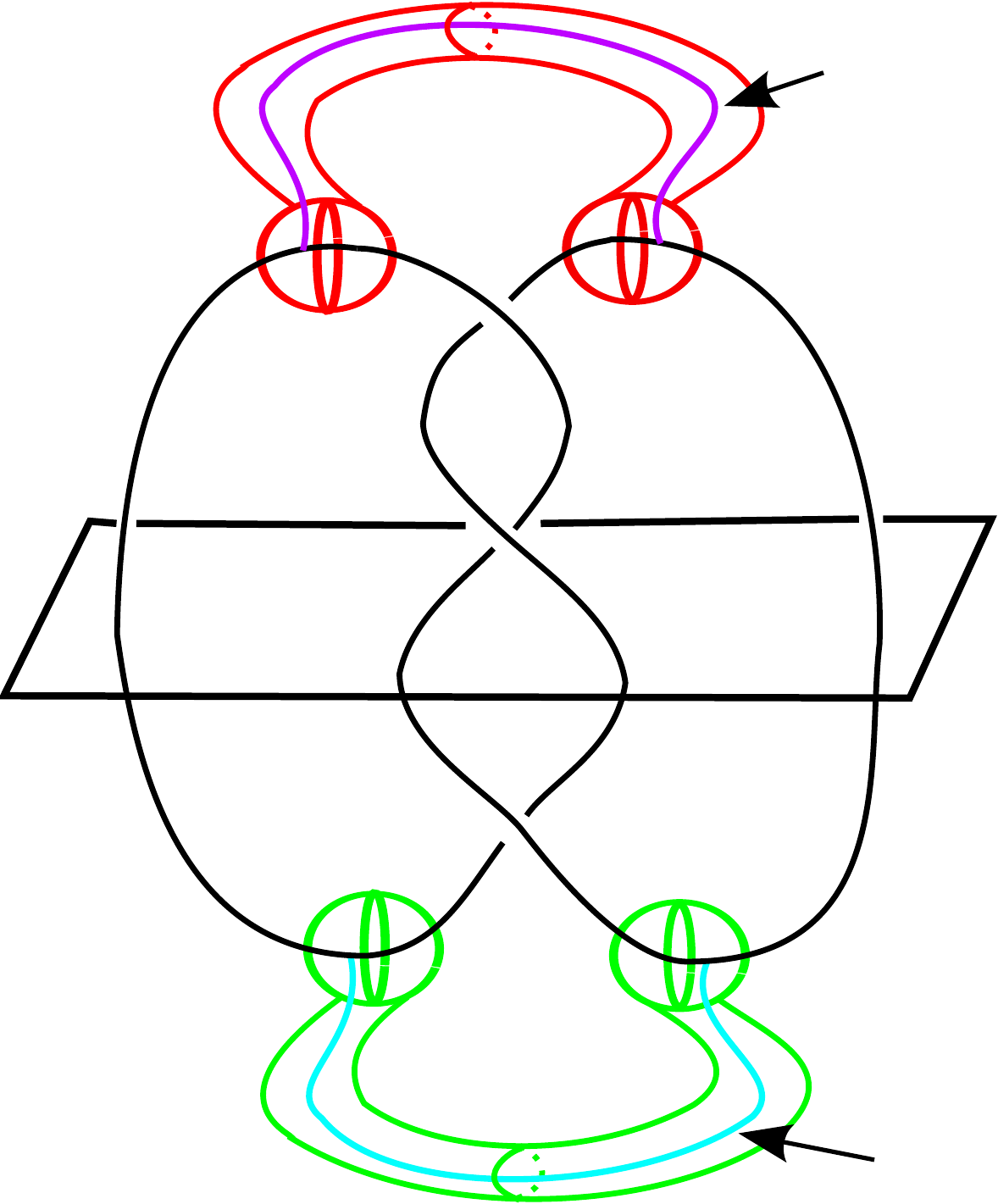}
  \put(-15,34){$K$}
  \put(-18, 141){$\Sigma_{(W,K)}$}
  \put(-118,123){$L_{top}$}
  \put(-115,25){$L_{bot}$}
  \put(-12,2){$\Sigma_{(W',K)}$}
  \caption{An example with $M=S^3$ and $K$ a trefoil}
  \label{trefoil}
  \end{center}
\end{figure}

Next we will describe how to triangulate $M_K$ so that the collection $\Gamma$ of annuli are normal with respect to the triangulation.  Consider the collection of $2n$ meridional annuli on $\partial M_K$ parallel to $\Gamma$.  In each annulus choose a meridian circle and view it as the union of a vertex and an edge.  This divides $\partial M_K$ into rectangles.  Now subdivide each rectangle by adding a diagonal edge connecting two adjacent vertices.  This gives a triangulation of $\partial M_K$.  See Figure \ref{triang}. 

\begin{figure}
  \begin{center}
  \includegraphics[width=2.0in]{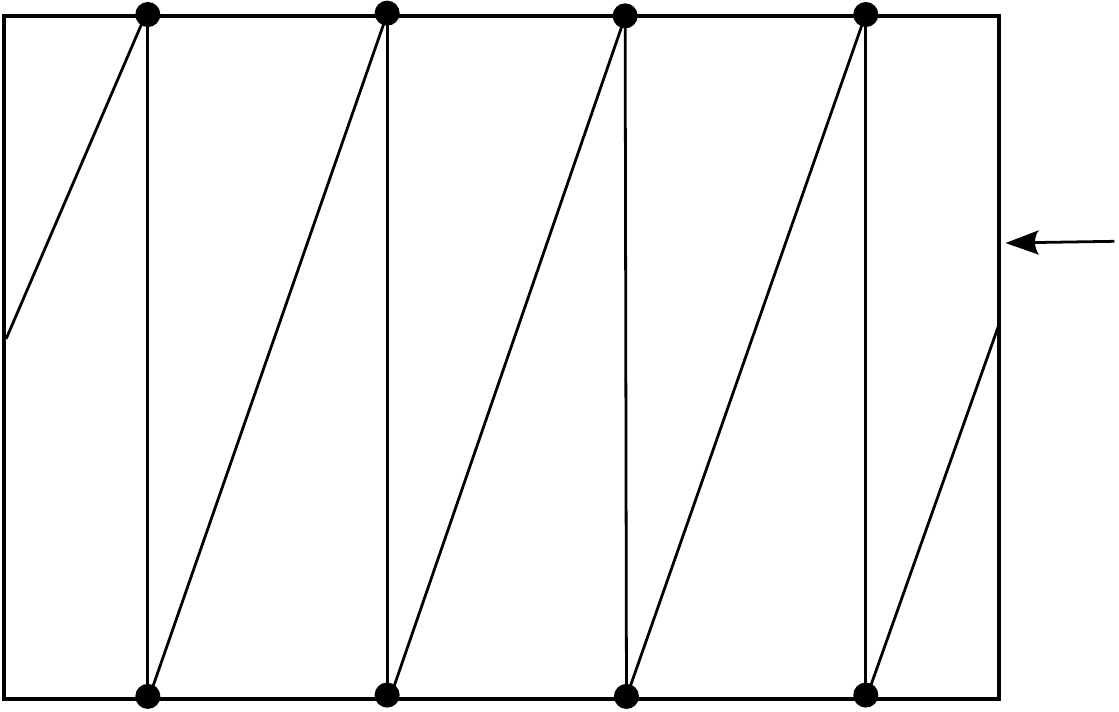}
  \put(-38,6){$\partial M$}
  \put(2,60){\small{meridian}}
  \caption{Triangulation of $\partial M$}
  \label{triang}
  \end{center}
\end{figure}

By \cite{JLR} (also see \cite{JR} pg. 56) we can extend the triangulation of $\partial M_K$ to a triangulation of all of $M_K$ without adding any vertices.  Denote this triangulation of $M_K$ by $\mathcal{T}$.   All of the vertices of this triangulation are contained in a neighborhood of the top and bottom leaves of the foliation.  The collection $\Gamma$ of $2n$ annuli are normal with respect to this triangulation and $\Gamma$ contains all of the $2n$ vertices of the triangulation to one side, separating them from the rest of $M_K$.  Also note that $\mathcal{T}$ has all of its vertices on $\partial M_K$, so it has no vertex-linking 2-spheres.  However, $\mathcal{T}$ may contain normal 2-spheres disjoint from $\Gamma$ that are not vertex-linking.  

Let $\Lambda$ denote a maximal collection of non-parallel disjoint normal 2-spheres in $M_K$ disjoint from $\Gamma$.  Cutting $M_K$ open along $\Lambda$ results in several components, but only one component will contain the torus boundary of $M_K$.  Call this component ${M_0}^+$.  Note that ${M_0}^+$ may have multiple 2-sphere boundary components along with $\partial M_K$.  Since $M_K$ is irreducible each 2-sphere in $\Lambda$ must bound a 3-ball to the opposite side of ${M_0}^+$.  

Since the annuli $\Gamma$ are normal and they contain the vertices of the triangulation to one side, each 2-sphere in $\partial {M_0}^+$ is connected to $\Gamma$ via an edge of $\mathcal{T}^1$.   If an edge of $\mathcal{T}^1$ connects two 2-sphere boundary components of ${M_0}^+$ then by Lemma 2 of \cite{Th} it follows that $M_0$ must be a punctured 3-ball with a torus boundary component, which it clearly is not.  Thus we can conclude each 2-sphere component of $\partial {M_0}^+$ is connected to $\Gamma^{top}$ or $\Gamma^{bot}$ by an edge of $\mathcal{T}^{1} \cap {M_0}^+$.  Assume without loss of generality that an edge connects the 2-sphere to $\Gamma^{top}$.  Taking a tube parallel to this edge connecting $\Gamma^{top}$ to the normal 2-sphere gives an almost normal annulus isotopic to an annulus in $\Gamma^{top}$.  By Lemma \ref{normal} this surface is isotopic to a normal surface giving a new collection of normal annuli that we will call $\Gamma^{top'}$.  We can isotope the tubes $t$ so that they lie in ${M_0}^+$.  We can do this for each normal 2-sphere in $M_0$ and then replace the original singular leaf $L_{top}$ with the singular leaf $L'_{top}=\Gamma^{top'} \cup \Sigma_{(W,K)}$.  Let $M_0$ be the side of $\Gamma^{top'}$ that lies in ${M_0}^+$. 

Let $K_0=K\cap M_0$.  Isotoping the bridge surface $S_K$ to be disjoint from $\Gamma'$ induces a splitting of $M_0$ into two $K_{0}$-compression bodies $W_0$ and $W'_0$.  Continue to call this splitting surfaces $S_K$.  The surface $S_K$ is a weakly incompressible splitting surface for $M_0$.  We can foliate $M_0$ with a standard foliation $F_0$ with leaves isotopic to $S_K$.  The top leaf of the foliation is $L_{top}$ and the bottom leaf is $L_{bot}$.  Let $\mathcal{T}^{1}_0$ denote the part of the 1-skeleton of $\mathcal{T}$ that lies in the interior of $M_0$.  Put $\mathcal{T}^{1}_0$ into thin position with respect to $F_0$.  Let $\Sigma_0$ denote the pair of spines $\Sigma_{(W_0,K_0)}$ and $\Sigma_{(W'_0,K_0)}$ of the $K_0$-compression bodies $W_0$ and $W'_0$ respectively.  Note that $\Sigma_{(W_0,K_0)} \subset \Sigma_{(W,K)}$ and $\Sigma_{(W'_0,K_0)} \subset \Sigma_{(W',K)}$.  If $\mathcal{T}^{1}_0$ intersects $\Sigma_0$ then isotope $\mathcal{T}^{1}_0$ slightly off of $\Sigma_0$. Let $\Gamma_0$ denote the pair $\Gamma^{top'}$ and $\Gamma^{bot'}$.

The triple $(M_0,\Sigma_0, \Gamma_0)$ is the first step in an iterative process.  Each later step will consist of a triple $(M_i,\Sigma_i, \Gamma_i)$ with the following properties:  $M_i \subset M_{i-1}$ will be a submanifold of $M_0$ for each $i$.  The surface $\Gamma_i=\partial M_i-\partial M$ will be a pair $\Gamma_{i}^{top}$ and $\Gamma_{i}^{bot}$ of properly embedded normal surfaces with respect to the triangulation $\mathcal{T}$ given above.  Let $K_i=K \cap M_i$.  The submanifold $M_i$ has a weakly incompressible splitting surface $S_K$ that separates $M_i$ into two $K_i$-compression bodies $W_i$ and $W'_i$.  Let $\Sigma_{i}^{top}$ and $\Sigma_{i}^{bot}$ denote the spines $\Sigma_{(W_i,K_i)}$ and $\Sigma_{(W'_i,K_i)}$ of $K_i$-compression bodies $W_i$ and $W'_i$ respectively.  Let $\Sigma_i$ denote the pair of spines $\Sigma_{i}^{top}$ and $\Sigma_{i}^{bot}$ of $W_i$ and $W'_i$ respectively.  The spine $\Sigma_{i}^{top}$ (resp. $\Sigma_{i}^{bot}$) can be extended to give a spine of $M-\Sigma_{(W',K)}\simeq W$ (resp. $M-\Sigma_{(W,K)}\simeq W'$).  As usual we define $\Sigma_i$ up to isotopy and slides of edges over other edges and over $\partial_{-}{W_i}=\Gamma^{top}_{i}$.  The complement of a regular neighborhood $N(\Sigma_{i}^{top})$ (resp. $N(\Sigma_{i}^{bot}$)) in $M_i$ is foliated by copies of $S_K$ with singular leaf $\Gamma_{i}^{bot} \cup \Sigma_{i}^{bot}$ (resp. $\Gamma_{i}^{top} \cup \Sigma_{i}^{top}$).  Thus $M_i$ can be foliated with a singular foliation $F_i$ by copies of $S_K$ with its top singular leaf consisting of $\Gamma_{i}^{top} \cup \Sigma_{i}^{top}$ and its bottom singular leaf consisting of $\Gamma_{i}^{bot} \cup \Sigma_{i}^{bot}$.  Now consider the edges of $\mathcal{T}^1$ that do not lie on $\partial M_K$ and let $\mathcal{T}^{1}_{i}$ denote their intersection with $M_i$.  Put $\mathcal{T}^{1}_{i}$ into thin position with respect to $F_i$.  

Denote by $t_{i}^{top}$ and $t_{i}^{bot}$ the boundary of a regular neighborhood $N(\Sigma_i)$ in $M_i$ which we continue to call ``tubes''.  A regular leaf near the top (resp. bottom) singular leaf is then obtained by attaching $t_{i}^{top}$ (resp. $t_{i}^{bot}$) to a punctured copy of $\Gamma_{i}^{top}$ (resp. $\Gamma_{i}^{bot}$). 

Here is a sketch of the iterative process that we will describe in detail later.  Start with $(M_i,\Sigma_i, \Gamma_i)$.  If, without loss of generality, $\chi(\Gamma_{i}^{top})=\chi(S_K)$ and $\Gamma_{i}^{top}$ is isotopic to an almost normal surface then since $\Gamma_{i}^{top}$ is obtained by compressing a leaf of the foliation it follows that $\Gamma_{i}^{top}$ is isotopic to $S_K$ and we are done.  Otherwise apply either Lemma \ref{Lemma5} or Lemma \ref{Lemma4} to obtain a new collection of normal and almost normal surfaces in $M_i$.  If there is an almost normal surface $G$ in the collection with $\chi(G)=\chi(S_K)$ then again because the almost normal surface  comes from compressing a leaf of the foliation we know $S$ must be isotopic to $S_K$ and we are done.  Otherwise use Lemma \ref{normal} to isotope the almost normal surfaces (if any) in the collection to be normal.  Then, using this new collection of normal surfaces we can cut $(M_i,\Sigma_i,\Gamma_i)$ along this collection to obtain $(M_{i+1}, \Sigma_{i+1},\Gamma_{i+1})$, which will also satisfy the above properties.  It turns out that we only need to repeat the recursive step a finite number of times before obtaining an almost normal surface isotopic to $S_K$.  This completes the sketch.

 Now, consider the arcs $\mathcal{T}^{1}_i$ in $M_i$ in thin position with respect to $F_i$.  Recall that all ends of $\mathcal{T}^{1}_i$ lie on $\Gamma_{i}^{top}$ or $\Gamma_{i}^{bot}$, part of the top or bottom singular leaves of $F_i$.  One possibility is that there is a maximum of $\mathcal{T}^{1}_i$ that is above a minimum of $\mathcal{T}^{1}_i$ which implies that there is a thick region of $\mathcal{T}^{1}_i$ in $M_i$.  The other possibility is that all of the minima of $\mathcal{T}^{1}_i$ are above all of the maxima of $\mathcal{T}^{1}_i$ and so there is no thick region.  In this situation we will consider separately the following two possibilities.  The first is that there is no thick region and there is some arc of $\mathcal{T}^{1}_i$ with both ends on $\Gamma_{i}^{top}$ or both ends on $\Gamma_{i}^{bot}$.  The second possibility is that there is no thick region and each arc each arc of $\mathcal{T}^{1}_i$ has one endpoint  on $\Gamma_{i}^{top}$ and the other endpoint on $\Gamma_{i}^{bot}$.  We will consider each  of the three possibilities in turn.

The first possibility is that there a thick region of $\mathcal{T}^{1}_i$ with respect to $F_i$.  

\begin{Lem}[cf. Lemma 5, \cite{St}]
\label{Lemma5}
If there is a thick region for $\mathcal{T}^{1}_i$ in $M_i$, then there is a collection of normal and almost normal surfaces in $M_i$ obtained from a leaf of the foliation by compressing the leaf to one side.  At most one surface can be almost normal.  Not all of the surfaces are boundary parallel.
\end{Lem}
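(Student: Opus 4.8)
The plan is to follow the argument for Lemma 5 of \cite{St}, adapted to the foliation $F_i$ whose leaves are now bounded surfaces. Fix a transverse leaf $L$ lying in a thick region of $\mathcal{T}^1_i$. Every regular leaf of $F_i$ is a copy of $S_K$, so $L$ is a weakly incompressible splitting surface for $M_i$. Because $L$ is a thick leaf, reading $F_i$ downward there is a maximum of $\mathcal{T}^1_i$ lying above $L$ and a minimum lying below $L$; the standard innermost-disk argument at these critical points (as in \cite{St} and \cite{Th}) provides an upper disk $D_u$ for $L$ (lying above $L$, with $\partial D_u = \alpha_u \cup \beta_u$, $\alpha_u\subset L$, $\beta_u\subset\mathcal{T}^1_i$ running over the maximum) and, symmetrically, a lower disk $D_l$ for $L$.

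Next I would run the thin-position dichotomy. Isotope $D_u$ and $D_l$ to intersect minimally. Since $M_K$, hence $M_i$, is irreducible, any circle of $D_u\cap D_l$ is removed by an innermost-disk/ball exchange, so $D_u\cap D_l$ is a (possibly empty) collection of arcs. As $D_u$ lies above $L$ and $D_l$ below, $D_u\cap D_l$ lies in $\alpha_u\cap\alpha_l$; in particular, if the arcs $\alpha_u$ and $\alpha_l$ on $L$ can be made disjoint then $D_u$ and $D_l$ are disjoint, and then pushing the corresponding maximum down along $D_u$ and the minimum up along $D_l$ is an isotopy of $\mathcal{T}^1_i$ strictly decreasing its width --- the bounded-surface version of the width-reducing move of \cite{Th, St} --- contradicting thin position. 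Hence no upper disk and lower disk for $L$ are disjoint. Now weak incompressibility of $L$ enters: an upper disk either permits such a width reduction or can be surgered to an honest compressing disk for $L$ on the $\Gamma_{i}^{top}$ side, and likewise a lower disk on the $\Gamma_{i}^{bot}$ side; since $L$ is weakly incompressible these surgered disks must still meet, and a careful analysis of the intersection graph of $D_u$ with $D_l$ should show that the only configuration compatible with all of these constraints is one localized in a single tetrahedron, producing there either a normal octagon or a tube parallel to an edge of $\mathcal{T}^1$.

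Granting that, I would finish as follows. Compress $L$ to the $\Gamma_{i}^{top}$ side along a complete collection of disjoint upper disks to obtain a (possibly disconnected) surface $L'$; by construction $L'$ is obtained from a leaf of $F_i$ by compressing to one side. The analysis above shows $L'$ can be isotoped so that it meets each tetrahedron in a disjoint union of normal disks with at most one exceptional almost normal piece (the octagon or edge-parallel tube), and the remaining isotopies used to normalize $L'$ away from that tetrahedron create no new exceptional pieces; this gives that the surfaces in the collection are normal and almost normal and that at most one is almost normal. For the last assertion, note that by Lemma \ref{Lemma6} the surface $L'$ is incompressible to the $\Gamma_{i}^{bot}$ side, and that at least one of the upper disks used is an essential compression of $L$ --- otherwise an isotopy of $\mathcal{T}^1_i$ across the corresponding trivial cap would reduce the width --- so $L'$ has a component that is not parallel into $\partial M_i$; equivalently, $L$ separates the spines $\Sigma_{i}^{top}$ and $\Sigma_{i}^{bot}$ and cannot be pushed into a collar of $\partial M_i$ after compressing to only one side. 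Hence not all of the surfaces are boundary parallel.

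The main obstacle is the combinatorial classification asserted in the second paragraph: identifying exactly which intersection patterns of the (surgered) upper and lower disks survive the joint constraints of thin position and weak incompressibility, and showing the survivor is localized in one tetrahedron and yields precisely one octagon or one edge-parallel tube, and no more. This is the step where the argument of \cite{St} omitted cases, and treating it correctly will require handling the extra boundary-edge and $\partial$-compression configurations that appear because the leaves of $F_i$ are surfaces with boundary, together with systematic use of slides of the edges of $\Sigma_i$ over $\Gamma_{i}^{top}$ to clear away the inessential part of the picture before reading off the exceptional piece.
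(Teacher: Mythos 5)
Your proposal has a genuine gap at exactly the step you yourself flag as ``the main obstacle'': the classification of how the thick leaf meets the 2-skeleton. That classification is the entire content of the lemma, and in the paper it is not re-derived from an upper-disk/lower-disk analysis; it is imported wholesale from Thompson \cite{Th}. Specifically, Claim 4.5 of \cite{Th} produces a transverse leaf $L$ in the first thick region meeting $\mathcal{T}^2$ only in normal arcs and in closed curves disjoint from $\mathcal{T}^1$, and Claims 4.1--4.3 of \cite{Th} then bound the curves of $L\cap\partial H$ so that every curve has length three or four except for at most one curve of length eight, occurring in a single tetrahedron. Without proving, or at least correctly invoking, these statements, your second paragraph is an announcement rather than an argument: the ``careful analysis of the intersection graph of $D_u$ with $D_l$'' is precisely what is missing.

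Beyond that, the compression mechanism in your third paragraph is not the right one. The collection of normal and almost normal surfaces is not obtained by compressing $L$ along the upper disks supplied by thin position; it is obtained by compressing the innermost closed curves of $L\cap\mathcal{T}^2$ and the non-disk (tube) components of $L\cap H$ inside each tetrahedron $H$. Two points then require separate justification, both present in the paper and absent from your sketch: (i) these local compressions are genuine compressions of $L$, which uses the absence of normal, hence of almost normal, 2-spheres in the collection (via Lemma \ref{normal}); and (ii) they all occur to the same side of $L$, which follows because a normal piece tubed to the opposite side of another would strongly compress the leaf, and because Lemma \ref{Lemma6} makes the compressed surface incompressible to the opposite side. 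Since every annular piece inside a tetrahedron is compressed, the unique exceptional piece that can survive here is a normal octagon; the edge-parallel tube does not arise in this lemma (it is the output of Lemma \ref{Lemma4}). Finally, your justification of ``not all boundary parallel'' (an essential upper disk implies a non-boundary-parallel component) does not follow as stated; the reason used in the paper is that the thick region traps a subarc of $\mathcal{T}^{1}_i$ with both endpoints on the compressed surface, lying between it and $\Gamma_{i}^{top}$, which obstructs parallelism.
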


\begin{proof}
The proofs of the following Claims \ref{claim4}, \ref{claim1}, \ref{claim2}, and \ref{claim3} can be found in \cite{Th}.  
Let $(M_i,\Sigma_i, \Gamma_i)$ be as described above, where $\mathcal{T}^{1}_i$ is in thin position with respect to the foliation $F_i$ of $M_i$.  Since there is a thick region of $\mathcal{T}^{1}_i$ in $F_i$ we can apply Claim 4.5 of \cite{Th}. 

\begin{Claim}[Claim 4.5, \cite{Th}]
\label{claim4} There exists a transverse leaf $L$ in the first thick region of $F_i$ which intersects the 2-skeleton entirely in normal arcs and simple closed curves disjoint from the 1-skeleton.  
\end{Claim}

Let $L$ be a leaf of $F_i$ in a thick region intersecting the 2-skeleton in normal arcs and simple closed curves disjoint from $\mathcal{T}^1$ as is guaranteed by Claim \ref{claim4}.  Then we can apply the following Claims \ref{claim1}, \ref{claim2}, and \ref{claim3} to the leaf $L$.  

\begin{Claim}[Claim 4.1, \cite{Th}]
\label{claim1}
  Let $H$ be any tetrahedron in the triangulation $\mathcal{T}$ of $M_K$.  Then $L \cap H$ contains no parallel curves of length greater than or equal to eight.
\end{Claim}

\begin{Claim}[Claim 4.2, \cite{Th}] 
\label{claim2} 
Let $H$ be any tetrahedron in the triangulation $\mathcal{T}$ of $M_K$.  Then $L \cap \partial H$ contains no curve of length greater than eight.
\end{Claim}

\begin{Claim}[Claim 4.3, \cite{Th}]  
\label{claim3}Let $H_1$ and $H_2$ be distinct tetrahedra in the triangulation of $M_K$.  Then $L \cap \partial {H_1}$ and $L \cap \partial {H_2}$ do not both contain curves of length eight. 
\end{Claim}

The above claims together imply that this leaf $L$ of the foliation $F_i$ intersects the 2-skeleton only in simple closed curves disjoint from the 1-skeleton and normal curves of lengths three, four, and at most one of length eight.  Compressing the simple closed curves in $\mathcal{T}^2$ and the surfaces in the interior of the tetrahedra gives a collection of normal surfaces with at most one almost normal surface.  The almost normal surface, if it exists, must be a normal octagon since we have compressed all annuli in the interior of the tetrahedra.  We can think of the leaf $L$ as this collection of normal and almost normal surfaces with tubes attached.  Since our triangulation has no normal 2-spheres we can conclude that this collection will contain no almost normal 2-spheres as well since any almost normal 2-sphere can be isotoped to give a normal one by Lemma \ref{normal}.  Also notice that in this collection we will not have a normal surface tubed to the opposite side of another normal surface or we get a contradiction to the leaf being weakly compressible.  To see that all of the compressions of $L$ are to one side, observe that as long as there are no normal 2-spheres in the collection then we know that compressions of $L \cap H$ are compressions of $L$ for any tetrahedron $H$ of $\mathcal{T}$. Lemma \ref{Lemma6} implies that after compressing $L$ to one side the remaining surface is incompressible to the opposite side, thus there cannot be compressions on opposite sides of $L$.   This completes the proof of Lemma \ref{Lemma5}.
\end{proof}

\begin{Rem}  There is no choice in the direction in which the tubes of $L$ compress, however Theorem \ref{Lemma6} implies that after compressing $L$, the remaining collection of normal and almost normal surfaces is incompressible in the direction opposite to which we have compressed. 
\end{Rem}

For the proof of the Lemma \ref{Lemma4} we will need the following:

\begin{Claim}
\label{claim}
A properly embedded, orientable, normal surface is incompressible and boundary incompressible in the complement of the 1-skeleton $\mathcal{T}^1$.  
\end{Claim}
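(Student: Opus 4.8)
\emph{Proof proposal.} The plan is the standard innermost-disk / outermost-arc argument relative to the $2$-skeleton. Write $\widehat M$ for $M_K$ cut along a regular neighborhood of $\mathcal{T}^1$; then each tetrahedron $H$ becomes a ball $\widehat H$, each face $F$ becomes a disk $\widehat F$, and, since $S$ is normal, $S$ meets each $\widehat H$ in a disjoint union of properly embedded normal disks and each $\widehat F$ in a disjoint union of normal arcs. Suppose, for contradiction, that $D$ is a compressing or boundary-compressing disk for $S$ in $\widehat M$, chosen within its isotopy class so that $|D\cap\mathcal{T}^2|$ is minimal. The goal is to force $D\cap\mathcal{T}^2=\emptyset$ and then read off the conclusion inside a single ball $\widehat H$.

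First I would eliminate closed curves of $D\cap\mathcal{T}^2$. An innermost such curve $c$ bounds a subdisk $D'\subset D$ lying in one ball $\widehat H$, with $c$ in one face-disk $\widehat F$ and disjoint from the normal arcs $S\cap\widehat F$; then $c$ cuts off from $\widehat F$ a disk $D''$ with interior disjoint from $S$ (the side not containing $\partial\widehat F$), so $D'\cup D''$ bounds a ball in $\widehat H$ disjoint from $S$, and pushing $D'$ across it removes $c$, contradicting minimality. So $D\cap\mathcal{T}^2$ consists of arcs. Taking an outermost one, $a$, cutting off $D_0\subset D$ with $\partial D_0=a\cup b$ and $b\subset\partial D$, outermostness forces $b$ to lie in a single normal disk $\delta\subset S\cap\widehat H$ and $a$ to lie in a single face-disk $\widehat F$ with both endpoints on the normal arc $\gamma=\delta\cap\widehat F$. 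Since $\gamma$ separates the disk $\widehat F$, the arc $a$ together with a subarc $\gamma'$ of $\gamma$ bounds a disk $E\subset\widehat F$ with interior disjoint from $S$, and the circle $b\cup\gamma'\subset\delta$ bounds a disk $E'\subset\delta$; then $D_0\cup E\cup E'$ is an embedded sphere in the ball $\widehat H$, and replacing $D_0$ by a pushoff of $E\cup E'$ across the ball it bounds is an isotopy of $D$ that removes $a$ and introduces no new intersection with $\mathcal{T}^2$, again contradicting minimality. Hence $D\cap\mathcal{T}^2=\emptyset$, so $D$ lies in one ball $\widehat H$ with $\partial D$ (resp.\ the subarc of $\partial D$ lying on $S$) contained in one normal disk $\delta$; as $\delta$ is a disk, this curve bounds a disk in $\delta\subset S$ (resp.\ this arc is boundary-parallel in $\delta$), contradicting the assumption that $D$ is an essential compressing (resp.\ boundary-compressing) disk. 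Therefore $S$ is incompressible and boundary-incompressible in $\widehat M$.

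The routine verifications I would supply are that $D''$, $E$, and $E'$ may be taken embedded with pairwise disjoint interiors (using only that parallel normal arcs in a face are disjoint and that a normal disk is a disk), and that each push-across genuinely lowers $|D\cap\mathcal{T}^2|$. The step demanding the most care is the boundary-incompressibility case: there the arc $b$ of $\partial D_0$ may run partly along the frontier annuli of $N(\mathcal{T}^1)$ rather than entirely along $S$, so one must rerun the argument with ``face-disk'' replaced by ``face-disk or frontier annulus'' and invoke that a normal disk meets each frontier annulus in boundary-parallel arcs; granting this, the reduction above applies verbatim.
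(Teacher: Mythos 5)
Your proof is correct and is exactly the ``standard innermost disk and outermost arc argument'' that the paper invokes without supplying details: you eliminate circles and then outermost arcs of $D\cap\mathcal{T}^2$ using the facts that each tetrahedron cut along $N(\mathcal{T}^1)$ is a ball and that $S$ meets it in disjoint normal disks, and you correctly flag the one delicate point (arcs of $\partial D$ running along the frontier of $N(\mathcal{T}^1)$ in the boundary-compression case). No substantive difference from the paper's intended argument.
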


\begin{proof}
This claim follows from a standard innermost disk and outermost arc argument.  
\end{proof}

The second possibility is that there is no thick region and some arc of $\mathcal{T}^{1}_{i}$ has both endpoints on $\Gamma_{i}^{top}$ or both endpoints on $\Gamma_{i}^{bot}$.

\begin{Lem}[cf. Lemma 4, \cite{St}]
\label{Lemma4}
If there is no thick region for $\mathcal{T}^{1}_i$ in $M_i$ and some arc of $\mathcal{T}^{1}_i$ has both endpoints on $\Gamma_{i}^{top}$  (resp. has both endpoints on $\Gamma_{i}^{bot}$), then there is an almost normal surface in $M_i$ that is isotopic to a surface obtained from a leaf of $F_i$ by compressing the leaf above (resp. below). 
\end{Lem}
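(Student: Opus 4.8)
I would prove Lemma~\ref{Lemma4} along the lines of Lemma~4 of~\cite{St}, modified to handle leaves with boundary. It suffices to treat the case in which $e\subset\mathcal{T}^1_i$ has both endpoints on $\Gamma_i^{top}$; replacing $F_i$ by the reversed foliation interchanges this with the case of both endpoints on $\Gamma_i^{bot}$ and interchanges compressing above with compressing below. Recall that ``no thick region'' means precisely that, reading the critical points of $\mathcal{T}^1_i$ with respect to $F_i$ from the top down, every minimum lies above every maximum; and since $e$ has both endpoints on $\Gamma_i^{top}$, a piece of the top singular leaf, it possesses at least one minimum, and all minima of $\mathcal{T}^1_i$ lie above all of its maxima.

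I would then fix a transverse leaf $L$ of $F_i$ lying below every minimum and above every maximum of $\mathcal{T}^1_i$; such a leaf exists precisely because there is no thick region. For this choice, each arc of $\mathcal{T}^1_i$ either meets $L$ in a single point and runs monotonically from $\Gamma_i^{top}$ to $\Gamma_i^{bot}$, or lies entirely above $L$ with both endpoints on $\Gamma_i^{top}$ (as does $e$), or lies entirely below $L$ with both endpoints on $\Gamma_i^{bot}$. By an argument parallel to Claim~4.5 of~\cite{Th} — stated there for thick regions, so that adapting it to the present no-thick-region situation is one of the cases in which this proof must extend~\cite{St} — the leaf $L$ may be taken to meet the $2$-skeleton entirely in normal arcs and in simple closed curves disjoint from $\mathcal{T}^1$. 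The analogs of Claims~\ref{claim1}, \ref{claim2} and~\ref{claim3} then show, exactly as in the proof of Lemma~\ref{Lemma5}, that $L$ meets each tetrahedron in normal disks, simple closed curves off $\mathcal{T}^1$, and at most one octagon.

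Next I would compress $L$ toward the top singular leaf. Since the arcs of $\mathcal{T}^1_i$ above $L$ all have both endpoints on $\Gamma_i^{top}$ and, away from those arcs and the graph part of $\Sigma_i^{top}$, the region above $L$ is a product, the simple closed curves, the interior annuli, and the tubes $t_i^{top}$ of $L$ can all be compressed above — possibly after the edge slides of $\Sigma_i^{top}$ that the definition of $(M_i,\Sigma_i,\Gamma_i)$ permits, including slides over $\partial_-W_i=\Gamma_i^{top}$. By the weak incompressibility of $S_K$ together with Lemma~\ref{Lemma6}, all of these compressions are to the same, upper, side — just as in Lemma~\ref{Lemma5} — and the hypotheses on $\mathcal{T}$ together with Lemma~\ref{normal} rule out any (almost) normal $2$-sphere components. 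Carrying out all of these compressions above produces a surface $G$ meeting each tetrahedron in normal disks except for at most one exceptional piece; and using Claim~\ref{claim}, so that the normal pieces of $G$ are incompressible off $\mathcal{T}^1$, together with the thinness of $\mathcal{T}^1_i$, one checks that this exceptional piece is genuinely present — were $G$ entirely normal one could use the dip of $e$ to reduce the width of $\mathcal{T}^1_i$. Hence $G$, after discarding its boundary-parallel normal components, is an almost normal surface obtained from the leaf $L$ of $F_i$ by compressing above, as required.

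The main obstacle is the passage from the closed setting of~\cite{St} to the bounded one: every leaf of $F_i$ meets the torus $\partial M_K$ in meridian circles, and the top singular leaf carries the normal surface $\Gamma_i^{top}$ in addition to a spine, so one must keep the foliation $F_i$, the edge slides over $\Gamma_i^{top}$, and the upward compressions all compatible with the fixed normal meridional triangulation of $\partial M_K$, and must verify that the dip of $e$ survives these moves and really does certify that the exceptional piece lies on the upper side. Establishing the no-thick-region analog of Thompson's Claim~4.5 — that $L$ can be arranged to meet the $2$-skeleton in normal arcs and simple closed curves disjoint from $\mathcal{T}^1$ — is the other delicate point, and is, I expect, among the cases left incomplete in~\cite{St} that the present argument is designed to repair.
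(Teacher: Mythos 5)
Your proposal has a genuine gap: it tries to run the thick-region argument (the analog of Claim~4.5 of \cite{Th}, i.e.\ Claim~\ref{claim4}) in a situation where there is no thick region, and that normalization step is exactly what is unavailable here. The proof of Claim~\ref{claim4} depends on the leaf sitting in a thick region: a non-normal arc of $L\cap\mathcal{T}^2$ yields an upper or lower disk, and thinness forbids a disjoint upper/lower pair only for a leaf squeezed between a maximum below and a minimum above. When every minimum lies above every maximum there is no such leaf, and a generic transverse leaf (in particular the leaf near the top singular leaf, which hugs $\Gamma_i^{top}$ together with tubes around $\Sigma_i^{top}$) will in general meet the $2$-skeleton non-normally. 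You flag this adaptation as ``delicate'' but do not supply it, and it cannot be supplied in the form you state; so the subsequent appeal to the analogs of Claims~\ref{claim1}--\ref{claim3} and the octagon count has no foundation.

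The actual mechanism is different and you have not identified it. One takes the arc $\beta\subset\mathcal{T}^1_i$ with both ends on $\Gamma_i^{top}$; since there is no thick region it has a single minimum and hence a lower disk $E$ with $\partial E=\alpha\cup\beta$, $\alpha$ in a leaf $L$ near the top singular leaf. One minimizes a two-term complexity of $E$ (the number of points of $\Sigma_i^{top}\cap\mathcal{T}^2$ met by $\alpha$, then the number of components of $E\cap\mathcal{T}^2$) and eliminates, by innermost-curve arguments and by edge slides of $\Sigma_i^{top}$ over itself and over $\Gamma_i^{top}$, all four types of components of $E\cap\mathcal{T}^2$. The conclusion is that $E$ lies in a single tetrahedron and $\alpha$ runs exactly once over a single edge $z$ of $\Sigma_i^{top}$ which is parallel to the subarc $\beta$ of $\mathcal{T}^1$; the almost normal surface is then $\Gamma_i^{top}\cup\partial N(z)$, whose exceptional piece is an annulus made of two normal disks joined by a tube parallel to an edge of the $1$-skeleton --- not an octagon, as your compression scheme would produce. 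This tube-parallel-to-an-edge case is precisely why that second type of exceptional piece appears in the definition of almost normal surface, and your proposal never produces it.
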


\begin{proof}
We will prove the Lemma for arcs of $\mathcal{T}^{1}_i$ with both endpoints on $\Gamma_{i}^{top}$.  The argument for arcs of $\mathcal{T}^{1}_i$ with both endpoints on $\Gamma_{i}^{bot}$ is symmetric.  Let $L$ be a leaf of the foliation near the top singular leaf, so that $L$ consists of the normal surface $\Gamma_{i}^{top}$ punctured and attached to the tubes $t_{i}^{top}=\partial N(\Sigma_{i}^{top})$.  See Figure \ref{trefoil} for the case $i=0$ and $M=S^3$.  Choose an arc $\beta$, a subarc of $\mathcal{T}^{1}_{i}$.  Since there is no thick region for $\mathcal{T}^{1}_i$, $\beta$ has only a single minimum and it is parallel to an arc on $L$, so there is a lower disk $E$ whose boundary is the union of $\beta$ and an arc $\alpha$ in $L$.  We will show that after some edge slides and isotopies of $\Sigma_{i}^{top}$, $\alpha$ runs once over exactly one tube of $t_{i}^{top}$, and that this tube connects two normal disks in a tetrahedron, therefore is part of an almost normal surface.  

Define the complexity of $E$ to be $(a,b)$, lexicographically ordered, where $a$ is the number of points of $\Sigma_{i}^{top} \cap \mathcal{T}^2$ to which $\alpha$ is also incident, and $b$ is the number of components in which $E$ meets the 2-skeleton of $\mathcal{T}$.  We will assume that the complexity of $E$ has been minimized over all choices of $E$. 

Observe that the arc $\alpha$ of $\partial E$ can't lie entirely in $\Gamma_{i}^{top}$.  Otherwise $E$ would be a boundary compressing disk in the complement of the 1-skeleton of the normal surface $\Gamma_{i}^{top}$ which contradicts Claim \ref{claim}.  Our strategy will be to show that there is a sequence of proper isotopies of $\Sigma_{i}^{top}$ and slides of ends of arcs of $\Sigma_{i}^{top}$ over each other and over $\Gamma_{i}^{top}$ (neither of which affect the isotopy class of $\Gamma_{i}^{top} \cup \Sigma_{i}^{top}$) so that afterwards $\alpha$ is incident to a single edge $z$ in $\Sigma_{i}^{top}$, $\alpha$ runs along this arc once, and $E$ lies entirely inside a single tetrahedron.  Then $\Gamma_{i}^{top} \cup \partial (N(z))$ is the required almost normal surface obtained from $L$ by compressing all other tubes of $\Sigma_{i}^{top}$.  Now that we have established some notation, for the rest of the proof we will consider the intersections of the disk $E$ with the 2-skeleton of $\mathcal{T}$ and we will show that any intersection violates the minimality of $(a,b)$  

When we consider the arcs of intersection between $E$ and the 2-skeleton, we can get four types of components of $E \cap \mathcal{T}^2$ in $E$.  Components of Type I are simple closed curves in $E$.  Components of Type II are arcs with both endpoints on $\alpha$.  Components of Type III are arcs with both endpoints on $\beta$, and components of Type IV are arcs with one endpoint on $\alpha$ and the other endpoint on $\beta$.  See Figure \ref{diskE}.  Next we will describe how each type of component of intersection of $E\cap \mathcal{T}^2$ can be removed, violating minimality of $(a,b)$.   

\begin{figure}
  \begin{center}
  \includegraphics[width=1.55in]{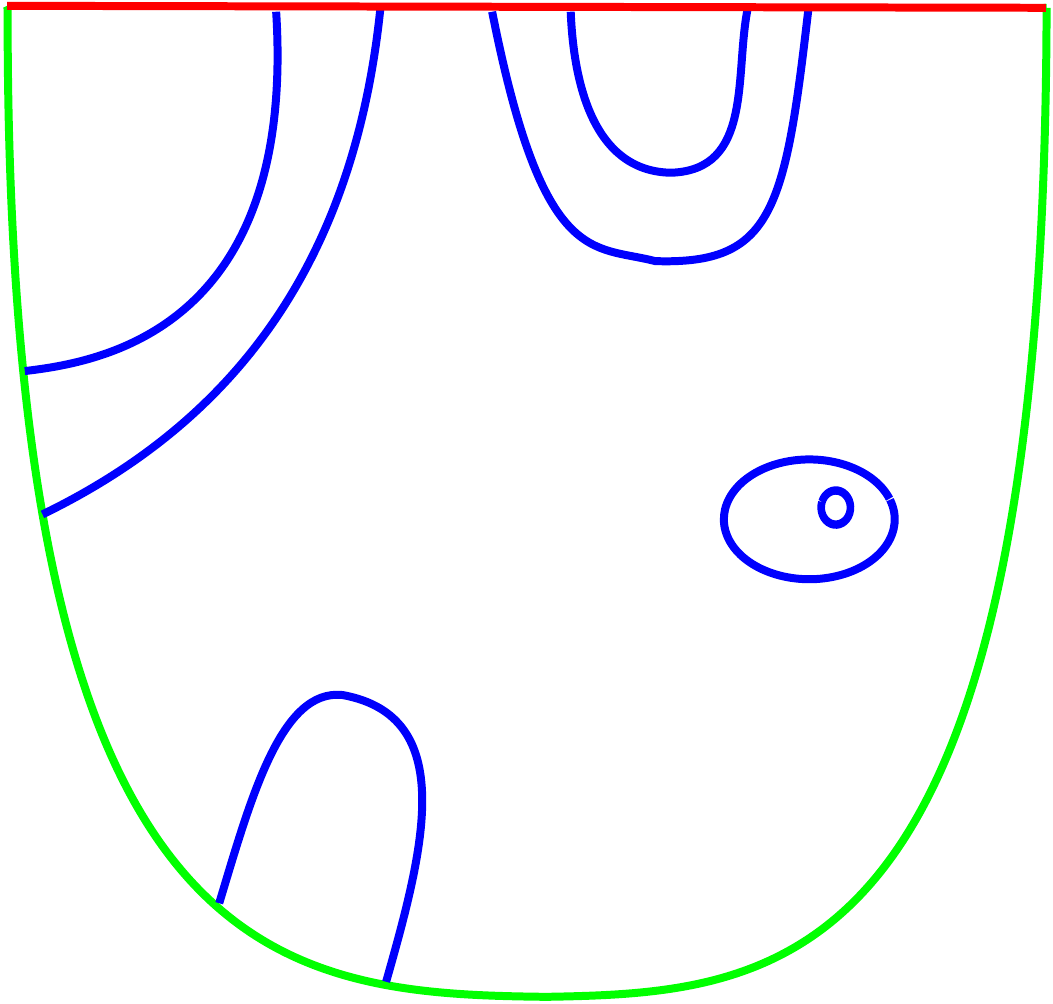}
 \put(-20,0){$\beta$}
 \put(-105, 110){$\alpha$}
 \put(-102, 80){IV}
 \put(-65, 77){II}
 \put(-19,34){I}
 \put(-62, 15){III}
  \caption{The disk E}
  \label{diskE}
  \end{center}
\end{figure}

Components of Type I are simple closed curves in $E$.  A component of Type I that is innermost in a 2-simplex of $\mathcal{T}^2$ can be removed by substituting the disk it bounds in $\mathcal{T}^2$ for the disk it bounds in $E$.  This reduces the number of times that $E$ meets the 2-skeleton, thus reducing $b$ and contradicting the minimality of $E$.

A component of Type II corresponds to an arc in a face $\sigma$ of some tetrahedron of $\mathcal{T}$ that either 1) has both endpoints on distinct components of $(\Sigma_{i}^{top} \cup \Gamma_{i}^{top})\cap \sigma$, or 2) has both endpoints on the same component of $(\Sigma_{i}^{top} \cup \Gamma_{i}^{top})\cap \sigma$.  Suppose $\gamma$ is an arc of intersection between $\mathcal{T}^2$ and $E$ that is outermost in $E$ and is of Type II.  Let $\delta$ be a subarc of $\alpha$ such that $\gamma \cup \delta$ is the boundary of a disk $D$ in $E-\mathcal{T}^2$.  See Figure \ref{type2-type4}b).  Then the arc $\gamma$ is either of type 1) or 2) above.  In what follows we will use edge slides of $\Sigma_{i}^{top}$ to remove components of $\mathcal{T}^2 \cap E$ and $\mathcal{T}^2 \cap \Sigma_{i}^{top}$.  Recall that $t_{i}^{top}$ is the boundary of a neighborhood of $\Sigma_{i}^{top}$, and we will abuse notation and consider $\delta \subset \alpha$ as an arc on $\Sigma_{i}^{top}$ when we really mean that $\delta$ is an arc on $t_{i}^{top}$. 

\begin{figure}
  \begin{center}
  \includegraphics[width=2.75in]{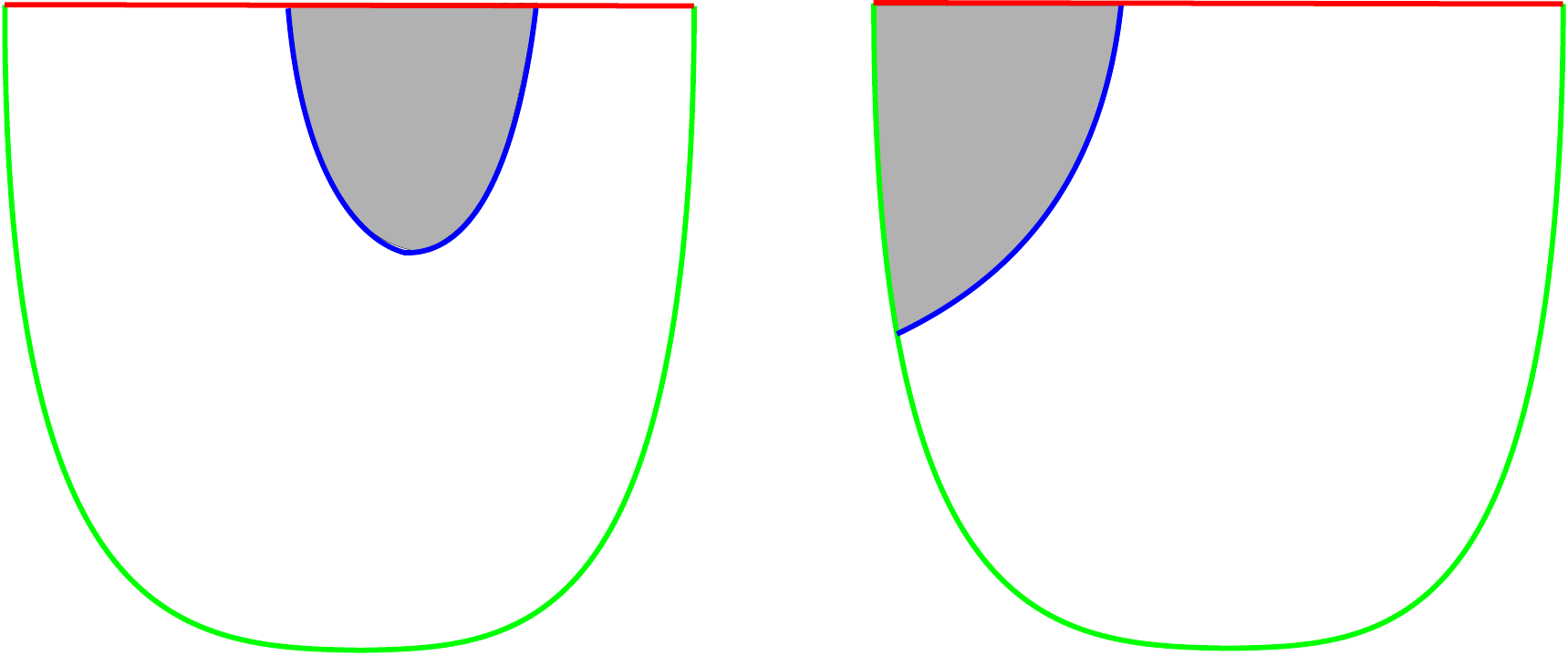}
 \put(-150,86){$\delta$}
 \put(-150,42){$\gamma$}
 \put(-75, 88){$\rho$}
 \put(-96, 57){$\lambda$}
 \put(-63,50){$\gamma$}
 \put(-80,60){$D$}
 \put(-150,62){$D$}
 \put(-165,-13){(a)}
 \put(-47,-13){(b)}
 \caption{Arcs of Type II and IV in $E$}
  \label{type2-type4}
  \end{center}
\end{figure}

Any two ends of edges of $\Sigma_{i}^{top}$ that meet the same normal disk in $\Gamma_{i}^{top}$ can be isotoped together so that there is at most one edge incident to each normal disk.  Since $\Sigma_{i}^{top}$ can be extended to give a spine of $W$, any cycle in $\Sigma_{i}^{top}$ gives a cycle in $\Sigma$.  Since $W \cup_{S} W'$ is an irreducible Heegaard splitting of $M$ it follows from \cite{F} (also see Proposition 2.5 of \cite{STh}) that no cycle of $\Sigma_{(W,K)}$ lies in a 3-ball.  Hence for any tetrahedron $H$ of $\mathcal{T}$,  $\Sigma_{i}^{top} \cap H$ cannot contain a cycle.  Thus $\Sigma_{i}^{top} \cap H$ is a union of trees for each tetrahedron $H$ in $\mathcal{T}$.  Each component of $\Sigma_{i}^{top} \cap H$ is a tree and each component of $(\Sigma_{i}^{top} \cup \Gamma_{i}^{top}) \cap H$ is a tree with disks attached and so is simply connected.

Arcs of type 1) fall into the  following subcases:

\textbf{Case a)} Let $H$ be the tetrahedron in $\mathcal{T}$ that contains $\delta$ and let $q$ denote the component of $\Sigma_{i}^{top} \cap H$ that contains $\delta$.  If $q$ is a single arc with both endpoints on the same face of $H$ then $D$ describes an isotopy that removes two points of intersection of $q$ with $\mathcal{T}^2$.  This reduces the number of points of $\Sigma_{i}^{top} \cap \mathcal{T}^2$ to which $\alpha$ is incident, thus reducing $a$, which is a contradiction. 

\textbf{Case b)}  Now suppose that $q$ is not an arc and $\delta$ is a path in $q$ that begins at point $x$ in $q$ that is not in a normal disk but is in some face $\sigma$ of $H$.  Let $z$ be the edge of $\Sigma_{i}^{top}$ containing $x$.  Then $\delta$ describes a series of edge slides of $z$ which culminate by introducing an extra point of intersection between $\Sigma_{i}^{top}$ and $\sigma$.  However, after the edge slides the disk $D$ runs only over the edge $z$.  Hence we can reduce the number of intersections of edges of $\Sigma_{i}^{top}$ incident to $\alpha$ that meet $\mathcal{T}^2$ by two as in Case a) reducing $a$ by at least one and contradicting the minimality assumptions.  See Figure \ref{edgeslide1}.

\begin{figure}
  \begin{center}
  \includegraphics[width=4.5in]{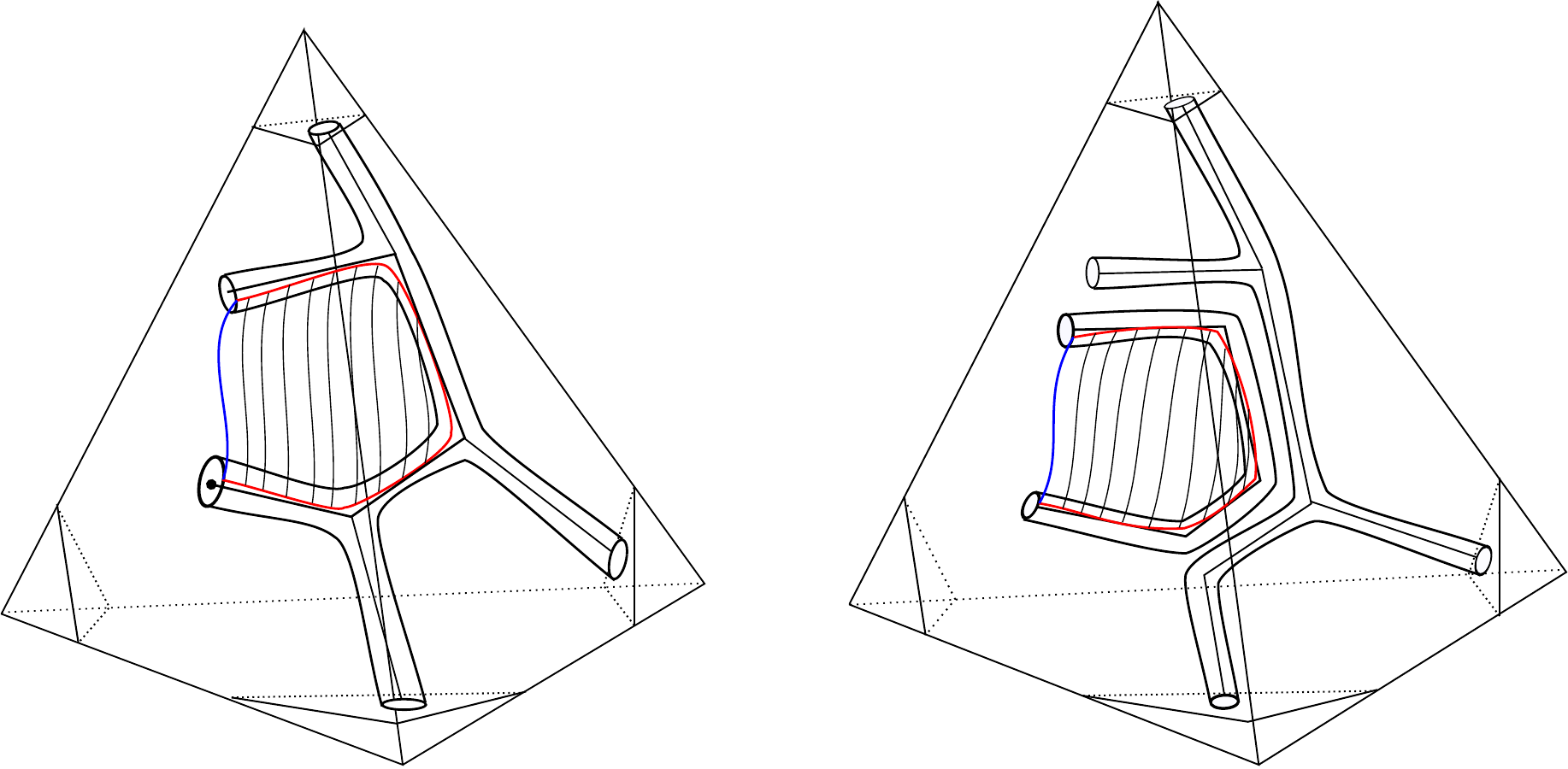}
  \put(-292,58){$x$}
  \put(-289,77){$\gamma$}
  \put(-265,75){$D$}
  \put(-270, 44){$z$}
  \put(-96, 66){$D$}
  \caption{}
  \label{edgeslide1}
  \end{center}
\end{figure}

\textbf{Case c)}  If $\delta$ is an arc on $L$ that has both endpoints of $\mathcal{T}^2 \cap E$ on normal disks of $L$, then either $\delta$ must run over some edges of $\Sigma_{i}^{top}$ or it lies in a normal disk.  If $\delta$ lies in a normal disk then as an outermost arc of the normal disk it cuts off a subdisk $D$ in the normal disk.  Together $\gamma$ and $\delta$ bound a disk $D'$ so that $D\cup D'$ bounds a 3-ball that can be used to isotope $\delta$ into the next tetrahedron removing $\gamma$ and thus reducing complexity.  

So we can assume that $\delta$ runs over some edges of $\Sigma_{i}^{top}$. Say that $\delta$ runs from normal disk $D_1$ to normal disk $D_2$.  Since $\Sigma_{i}^{top}$ is incident to $D_1$ in only a single point, $\delta$ is incident to $\partial D_1$ in a single point.  It follows that $D_1-N(\Sigma_{i}^{top})$ is an annulus and $\delta$ intersects the annulus in a single spanning arc.  Thus $\delta$ runs precisely once along the edge $z$ that is incident to $D_1$.  Then, as above, $D$ describes a slide and isotopy of $z$ that carries it to the arc $\gamma$ in a simplex of $\mathcal{T}^2$.  But then a subdisk of that face describes a parallelism between $z$ and a subarc of $\mathcal{T}^1$.  In particular, attaching a tube to $\Gamma_{i}^{top}$ along $z$ gives an almost normal surface. 

Arcs of type 2) have endpoints on the same component of $q\cap \partial H$.  Let $x$ denote the endpoint of $q$ in the face $\sigma$ of tetrahedron $H$.  We assume that $x\in \Sigma_{i}^{top}$, so $\gamma$ forms a loop based at $x$ in $\sigma$ bounding a disk $A$ in $\sigma$; the case where both ends of $\gamma$ lie on a normal disk is similar:  

\textbf{Case a)}  If $interior(A)\cap \Sigma_{i}^{top}=\emptyset$, then construct new disks $E'$ and $E''$ by cutting the disk $D$ along $\gamma$ and attaching a copy of $A$ to each piece.  One of the disks $E'$ or $E''$ will still be a lower disk and it will meet $\mathcal{T}^2$ in fewer components than $E$, contradicting the minimality of $E$.

\textbf{Case b)}  Now suppose that $A\cap \Sigma_{i}^{top} \neq \emptyset$.  Since $\Sigma_{i}^{top}$ is a union of trees in $H$, we know that a neighborhood of each component $q$ of $\Sigma_{i}^{top} \cap H$ is a 3-ball.  So there is a disk $D'$ in $N(q)$ whose boundary is the union of $\delta$ and a diameter $\delta'$ of a small disk $\epsilon$ with which $N(q)$ meets $\partial H$ at $x$.  

Isotope the leaf $L$ by compressing $\delta$ to $\delta'$ in $N(q)$, splitting the disk $\epsilon$ in two.  The effect on the spine is a possibly complicated series of edge slides.  The overall effect is that the number of components of $\Sigma_{i}^{top} \cap \sigma$ increases by one when $\epsilon$ splits, and $D\cup D'$ becomes a disk disjoint from $\Sigma_{i}^{top}$ and parallel to $A$.  The disk $A$ now contains at least two points of $\Sigma_{i}^{top} \cap \sigma$.  Now push $D\cup D'$ across $A$ to remove $\gamma$, thus reducing $b$ which is a contradiction.  See Figure \ref{edgeslides2}.

\begin{figure}
  \begin{center}
  \includegraphics[width=4.4in]{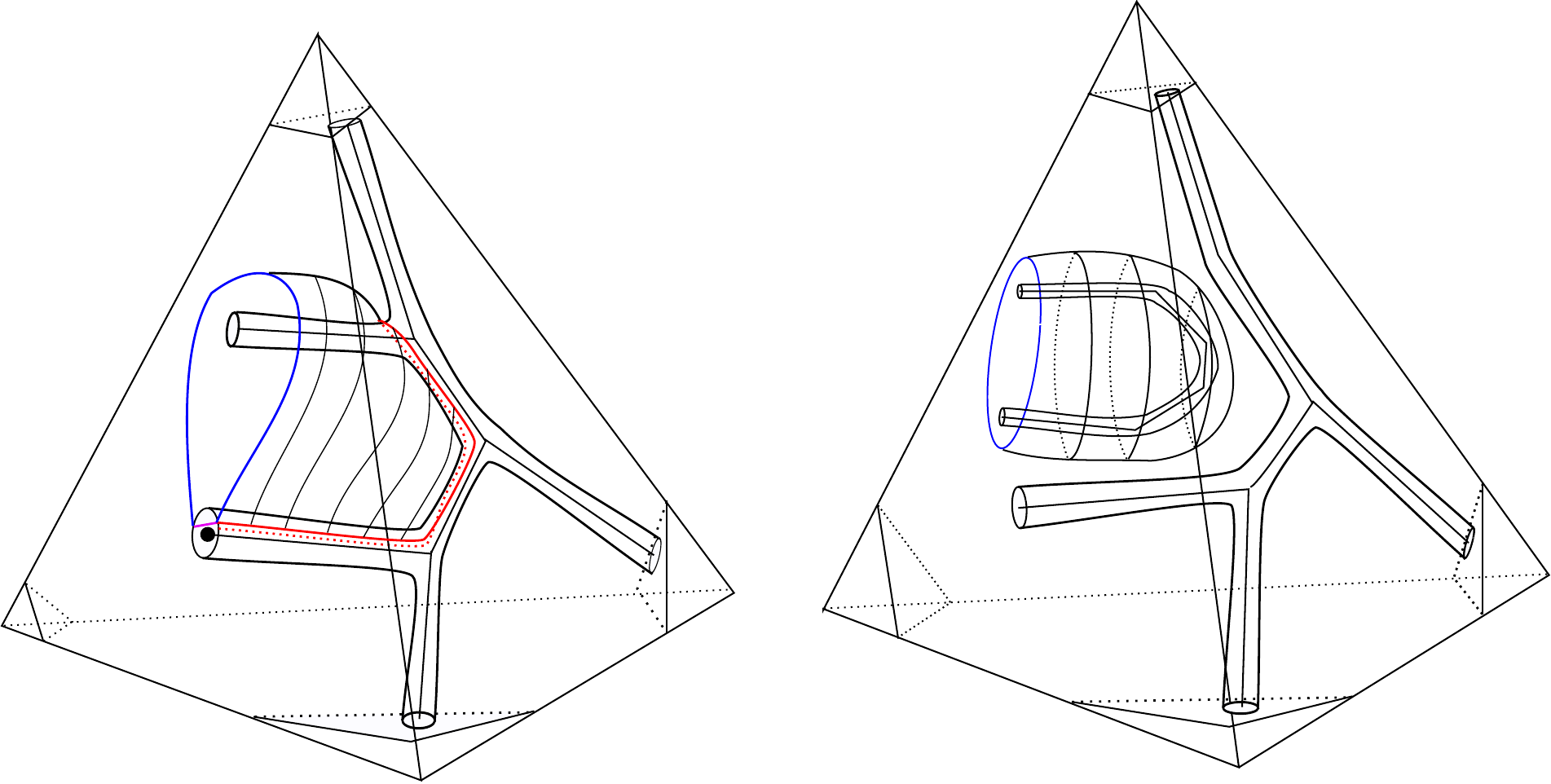}
  \put(-273,38){$x$}
  \put(-275,75){$A$}
  \put(-288, 67){$\gamma$}
  \put(-255, 60){$D$}
  \caption{}
  \label{edgeslides2}
  \end{center}
\end{figure}

To see how to remove components of Types III and IV it will be helpful to view the arc $\beta$ that runs along the edge $e$ of $\mathcal{T}^1$ as an arc that lies on $\partial N(e)$.  As an arc on $\partial N(e)$,  $\beta$ may wind around the edge $e$.  If the winding is not monotone a priori then we can reduce the number of components in which the disk $E$ meets the faces of $\mathcal{T}^2$, contradicting minimality.  Thus we may assume that the curve $\beta$ winds monotonically around the edge $e$.  This implies that there are no curves of Type III since the existence a curve of Type III means that the arc $\beta$ must `double back' as it winds around $e$, contradicting monotonicity.  

Let $\gamma$ be an outermost arc component of Type IV, and let $D$ be the corresponding outermost sub-disk of $E$.  Let $\sigma$ be the face of $\mathcal{T}^2$ that contains $\gamma$.  The disk $D$ is co-bounded by a sub-arc $\rho$ of $\alpha$, a sub-arc $\lambda$ of $\beta$, and $\gamma$.  See Figure \ref{type2-type4}a).  Let $H$ denote the tetrahedron containing $D$ in its interior and with $\sigma$ as a face.  

There are two cases that we will consider separately.  The first case is when the arc $\gamma$ of $E\cap \sigma$ that runs from the edge $e$ to $L$ ends on a normal disk $\eta$ of $L$.  The second case is when $\gamma$ ends on a tube (neighborhood of $\Sigma_{i}^{top}$) of $L$.  

\textbf{Case a)}:  Suppose first that $\gamma$ ends on a normal disk $\eta$.  In this situation there are two subcases.  Either $\Sigma_{i}^{top} \cap \rho=\emptyset$ or $\Sigma_{i}^{top}\cap \rho \neq \emptyset$.  See Figure \ref{figure6}a) and \ref{figure6}b).  

\textbf{Subcase 1)}: Suppose that $\Sigma_{i}^{top}\cap \rho=\emptyset$.  In this case the arc $\rho$ runs over only normal disks.  Observe that there is a disk $D'$ in $\partial N(e)$ that is bounded by $\lambda$, a copy of part of the edge $e$ that bounds the face $\sigma$, and a copy of a meridian of $\partial N(e)$.  In this case $D'\cup \sigma \cup \eta$ bounds a 3-ball in $H$ that we can use to isotope $D$ across $\sigma$ and into the next tetrahedron, removing $\gamma$ and reducing $b$, thus reducing the complexity of $E$, a contradiction.  See Figure \ref{figure6}a).  

\textbf{Subcase 2)}:  Suppose now that $\Sigma_{i}^{top} \cap \rho \neq \emptyset$.  Since $\Sigma_{i}^{top}$ is a union of trees, each component of $N(\Sigma_{i}^{top})$ is a 3-ball.  In particular, there is a disk $\Delta$ in $N(\Sigma_{i}^{top})$ whose boundary is the union of a sub-arc of $\rho$ and a diameter $d$ of the disk $\epsilon$ with which $N(\Sigma_{i}^{top})$ intersects the normal disk $\eta$.  Isotope $N(\Sigma_{i}^{top})$ by compressing $d$ to $\rho$ in $N(\Sigma_{i}^{top})$, splitting the disk $\epsilon$ in two.  The effect on $\Sigma_{i}^{top}$ is a series of edge slides that results in a new component of $\Sigma_{i}^{top} \cap H$ that is on the same side of $\rho$ as $e$.  Now proceed as in Subcase 1).  See Figure \ref{figure6}b). 

\begin{figure}
  \begin{center}
  \includegraphics[width=4in]{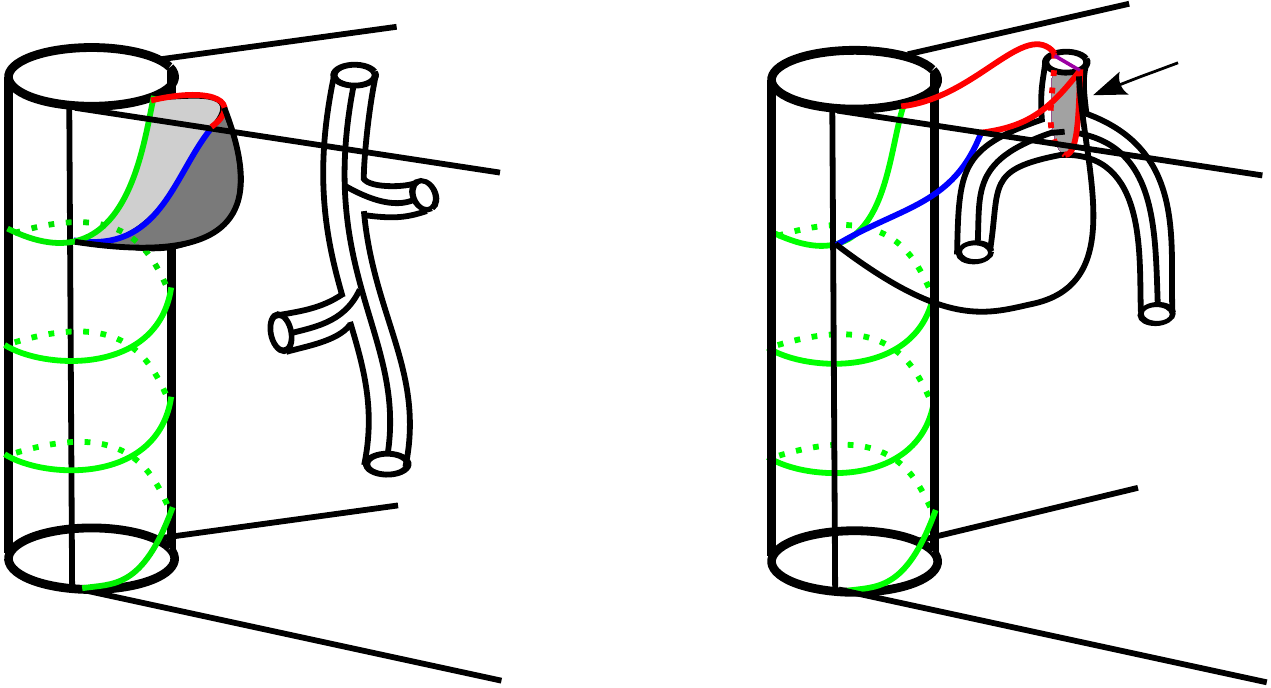}
  \put(-103,4){$N(e)$}
  \put(-65,-15){(b)}
  \put(-282,5){$N(e)$}
  \put(-241,-15){(a)}
  \put(-271,118){\tiny{$D'$}}
   \put(-97,118){\tiny{$D'$}}
  \put(-243,93){\tiny{$D$}}
   \put(-45,82){\tiny{$D$}}
    \put(-191,52){$\Sigma_i$}
    \put(-18,80){$\Sigma_i$}
    \put(-40,143){$\epsilon$}
    \put(-18,140){\small{$\Delta$}}
  \caption{Arcs of type IV}
  \label{figure6}
  \end{center}
\end{figure}

\textbf{Case b)}:  Now suppose that $\gamma$ ends on a tube of $L$.  The core of this tube is an edge $\tau$ that may connect to other edges of $\Sigma_{i}^{top}$ in $\Sigma_{i}^{top} \cap H$, and $\Sigma_{i}^{top}$ connects to a normal disk $\eta$.  See Figure \ref{figure7}.  We will describe in two steps a slide of $\tau$ and an isotopy of $D$ that will remove a component of $\Sigma_{i}^{top}$, reducing $a$, and thereby reducing the complexity of $E$.  

First, since $\tau$ connects to other edges of $\Sigma_i$ in $\Sigma_i\cap H$, $\rho$ describes an edge slide of $\tau$ that keeps $\tau \cap \sigma$ fixed but slides the opposite end of the edge $\tau$ off of $\Sigma_i$ and onto the normal disk $\eta$.  We continue to slide $\tau$ along $\eta$ following $\rho$ until it almost meets $\partial N(e)$.  See Figure \ref{figure7}b).  Now we can use the disk $D$ to isotope all of $\tau$ until it lies close to $\lambda \cup \gamma$.  At this point the entire disk $D$ and tube $\tau$ lie very close to $\beta \cup \gamma$ in $H$.  See Figure \ref{figure7}c).

For the second step recall the disk $D'$ in $\partial N(e)$ that is bounded by $\lambda$, a copy of part of the edge $e$ that bounds the face $\sigma$, and a copy of a meridian of $\partial N(e)$.  The disks $D$ and $D'$ describe an isotopy of $\tau$ across the face $\sigma$ and into the next tetrahedron, removing the component $\gamma$ from $\sigma \cap E$ and, in particular, removing the component of intersection between $\tau$ and $\sigma \in \mathcal{T}^2$, reducing  $a$, which is a contradiction.  See Figure \ref{figure7}d).  Thus there can be no arcs of Type IV.  Therefore the arc $\beta$ of $\mathcal{T}^{1}_i$, the edges of $\Sigma_{i}^{top}$ that $\alpha$ runs along, and the disk $E$ are all contained in one tetrahedron.  The proof now follows as in Case 1c) above.
\end{proof}

\begin{figure}
  \begin{center}
  \includegraphics[width=4.25in]{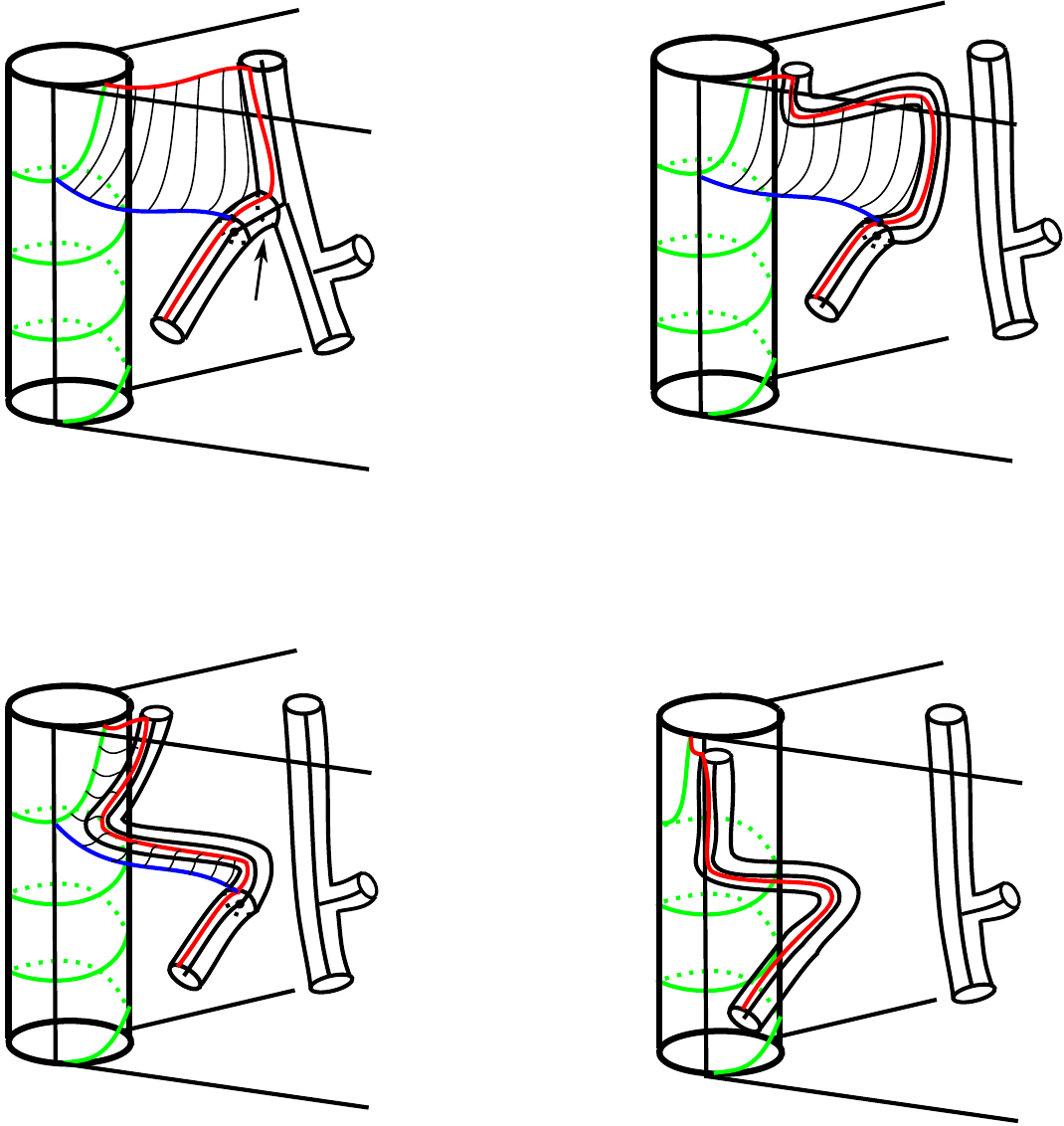}
  \put(-116,0){$N(e)$}
  \put(-75,-20){(d)}
  \put(-307,3){$N(e)$}
  \put(-265,-15){(c)}
  \put(-302,188){$N(e)$}
  \put(-265,175){(a)}
  \put(-238,230){$\tau$}
  \put(-118,192){$N(e)$}
  \put(-75,175){(b)}
  \put(-201,229){$\Sigma_i$}
  \put(-290,103){\tiny{$D'$}}
  \put(-257,278){$D$}
  \put(-62,275){$D$}
  \caption{Arcs of type IV}
  \label{figure7}
  \end{center}
\end{figure}

The third possibility is that there is no thick region and each arc of $\mathcal{T}^{1}_i$ has one endpoint on $\Gamma_{i}^{top}$ and the other endpoint on $\Gamma_{i}^{bot}$.

\begin{Lem}
\label{Lemma3}
If there is no thick region for $\mathcal{T}^{1}_i$ in $M_i$ and each arc of $\mathcal{T}^{1}_i$ has one endpoint on $\Gamma_{i}^{top}$ and has the other endpoint on $\Gamma_{i}^{bot}$ then $M_i$ is a product region. 
\end{Lem}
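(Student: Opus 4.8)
The plan is to prove Lemma~\ref{Lemma3} in three steps: first, that under the hypotheses every arc of $\mathcal{T}^{1}_i$ is \emph{monotone} with respect to the height function $h$ of $F_i$ (i.e.\ transverse to every regular leaf of $F_i$ and meeting it in a single point per arc); second --- and this is the crux --- that a monotone $1$-skeleton forces both spines $\Sigma_i^{top}$ and $\Sigma_i^{bot}$ to have empty graph part; and third, that this makes $M_i$ a product.

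For the first step, suppose toward a contradiction that some arc $\beta$ of $\mathcal{T}^{1}_i$ is not monotone. Since $\mathcal{T}^{1}_i$ is in thin position with respect to $F_i$, $\beta$ has at least one tangency with $F_i$. Traverse $\beta$ from its endpoint on $\Gamma_i^{top}$; the arc must descend from $\Gamma_i^{top}$, so its first critical point is a local minimum, say at height $h_1<1$. After this minimum the arc ascends, and since its other endpoint lies on $\Gamma_i^{bot}$ at height $0$, the arc must descend again later, so it passes through a local maximum at some height $h_2>h_1$ before reaching $\Gamma_i^{bot}$. Hence $\beta$ carries a local minimum (at height $h_1$) lying strictly below a local maximum (at height $h_2$). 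But the hypothesis that there is no thick region means precisely that, reading all tangencies of $\mathcal{T}^{1}_i$ from the top downward, no local maximum is ever immediately followed by a local minimum; equivalently, every local minimum of $\mathcal{T}^{1}_i$ lies above every local maximum. This contradicts the previous sentence, so every arc of $\mathcal{T}^{1}_i$ is monotone, and each regular leaf of $F_i$ meets $\mathcal{T}^{1}_i$ transversally in exactly one point per arc.

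I would dispatch the third step next, to see what is at stake: if both graph parts are empty, then $\Sigma_i^{top}=\Gamma_i^{top}$ and $\Sigma_i^{bot}=\Gamma_i^{bot}$, so by definition of a spine the $K_i$-compression bodies $W_i$ and $W'_i$ are products --- $W_i\cong\Gamma_i^{top}\times I$ with $\Gamma_i^{top}\cong S_K$ and $K_i$ vertical, and likewise for $W'_i$ --- and gluing along $S_K$ gives $M_i\cong S_K\times I$, a product region. So everything comes down to the second step.

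The main obstacle is exactly this second step, because a priori a monotone $1$-skeleton is compatible with nontrivial spine graphs (equivalently, with tubes in the regular leaves near $\Gamma_i^{top}$ or $\Gamma_i^{bot}$); the point is that monotonicity of $\mathcal{T}^{1}_i$, together with the weak incompressibility of $S_K$ and the absence of a thick region, rules this out. If the graph part of $\Sigma_i^{top}$ were nonempty, a regular leaf $L\cong S_K$ close to the top singular leaf would contain a tube whose meridian disk is a compressing disk for $L$ on the top side with essential boundary. Because no arc of $\mathcal{T}^{1}_i$ ever turns around, nothing can cancel this tube as $L$ is pushed down through $F_i$: one normalizes the relevant faces and tetrahedra of $\mathcal{T}$ against $F_i$ following the techniques of \cite{Th} and \cite{St}, using Claim~\ref{claim} to remove closed curves of intersection and edge slides of the spines (as in the proof of Lemma~\ref{Lemma4}) to remove the remaining tangencies, so the top-side compression persists arbitrarily far down. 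A symmetric argument applies if $\Sigma_i^{bot}$ also carries a tube, and the two surviving compressions, transported into a common leaf, give disjoint compressing disks on opposite sides, contradicting weak incompressibility of $S_K$; the case in which only one of the two spines carries a tube is handled similarly with the help of Lemma~\ref{Lemma6} (compressing $S_K$ all the way down to $\Gamma_i^{top}$ would leave it compressible on the far side). Either way both graph parts are empty, and by the third step $M_i$ is a product region. Making this ``push-down'' rigorous in the presence of $\mathcal{T}^2$ and the spine edges is where the real work lies.
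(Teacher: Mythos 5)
Your step 1 is fine (a non-monotone arc running from $\Gamma_i^{top}$ to $\Gamma_i^{bot}$ has a minimum below a maximum, hence forces a thick region), but step 2 is where the argument breaks, and it breaks in two ways. First, the contradiction you aim for does not materialize: if both spine graphs are nonempty, transporting the top tube's meridian disk down and the bottom tube's meridian disk up into a common leaf produces compressing disks on opposite sides of $S_K$, but there is no reason these are \emph{disjoint}. Every nontrivial weakly incompressible splitting has compressing disks on both sides; weak incompressibility only says such pairs always intersect, and your two extended disks will in general intersect each other. Monotonicity of $\mathcal{T}^1_i$ gives no control here, since compressing disks for $S_K$ need not avoid the $1$-skeleton at all, so the ``push-down'' has no mechanism for achieving disjointness. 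Second, and more structurally, you are trying to prove something strictly stronger than the lemma asserts, namely that the spines are empty, i.e.\ that the splitting of $M_i$ is the trivial one. That is essentially the Scharlemann--Thompson classification of Heegaard splittings of $(\mathrm{surface})\times I$, which the paper invokes \emph{after} Lemma \ref{Lemma3} (in Case 3 of the main proof) precisely because it does not follow from the lemma's hypotheses by a soft sweep-out argument. The conclusion ``$M_i$ is a product'' is a statement about the region between the two normal surfaces $\Gamma_i^{top}$ and $\Gamma_i^{bot}$ and should not be routed through the bridge splitting at all.

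The intended proof is purely combinatorial. Since $\Gamma_i^{top}$ and $\Gamma_i^{bot}$ are normal and every arc of $\mathcal{T}^1_i$ in $M_i$ runs from one to the other without turning back, each component of $M_i\cap\sigma$ for a face $\sigma$ of $\mathcal{T}^2$ is either a trapezoid (two opposite sides normal arcs of $\Gamma_i$, the other two sides subarcs of $\mathcal{T}^1_i$) or a hexagon whose sides alternate between three normal arcs of $\Gamma_i\cap\sigma$ and three subarcs of $\mathcal{T}^1_i$ joining them. In the hexagonal case, two of the three normal arcs lie on the same one of $\Gamma_i^{top}$ or $\Gamma_i^{bot}$, so the subarc of $\mathcal{T}^1_i$ between them has both endpoints on $\Gamma_i^{top}$ or both on $\Gamma_i^{bot}$, contradicting the hypothesis. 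With only trapezoids in the $2$-skeleton, each component of $M_i\cap H$ for a tetrahedron $H$ is a triangular or quadrilateral product block between a normal disk of $\Gamma_i^{top}$ and one of $\Gamma_i^{bot}$, and assembling these blocks exhibits $M_i$ as a product. No compressing-disk argument is needed, and the statement about the spines that your approach requires is deferred to the application of \cite{STh} in the main proof.
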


\begin{proof}
Recall that $\partial M_i=\Gamma_{i}^{top} \cup \Gamma_{i}^{bot}$.  Since $\Gamma_{i}^{top}$ and $\Gamma_{i}^{bot}$ are normal with respect to $\mathcal{T}$ it follows that there are two possibilities for how the region $M_i$ between $\Gamma_{i}^{top}$ and $\Gamma_{i}^{bot}$ can intersect a face of the 2-skeleton.  Either the region bounded by $\Gamma_{i}^{top}$ and $\Gamma_{i}^{bot}$ is a trapezoid region or a hexagon region.

Suppose that there is a hexagon region of $M_i\cap \mathcal{T}^2$.  Then three edges of the hexagon are arcs of $(\Gamma_{i}^{top} \cup \Gamma_{i}^{bot}) \cap \mathcal{T}^2$ and that the other three edges are arcs of $\mathcal{T}^{1}_{i}$ connecting the three components of $\Gamma_i \cap \mathcal{T}^2$.  But this implies that some arc of $\mathcal{T}^{1}_{i}$ connects either $\Gamma_{i}^{top}$ to  $\Gamma_{i}^{top}$ or $\Gamma_{i}^{bot}$ to $\Gamma_{i}^{bot}$ which is a contradiction.  Therefore there cannot be any hexagonal components and all regions of intersection between $M_i$ and $\mathcal{T}^2$ are trapezoids.  

Because we know that there are no hexagonal components of intersection between $M_i$ and $\mathcal{T}^2$ this implies that the only possibilities for components of intersection between $M_i$ and the tetrahedra of the 3-skeleton are triangular product regions and quadrilateral product regions.  See Figure \ref{Figure9}.  Each triangular and quadrilateral product region is bounded on one side by a normal disk of $\Gamma_{i}^{top}$ and on the other by a normal disk of $\Gamma_{i}^{bot}$.  Since each component of $M_i\cap H$ is a product region with one end on each of $\Gamma_{i}^{top}$ and $\Gamma_{i}^{bot}$ for each tetrahedron $H$ in $\mathcal{T}$ we can conclude that $M_i$ is itself such a product region.

\begin{figure}
  \begin{center}
  \includegraphics[width=2.45in]{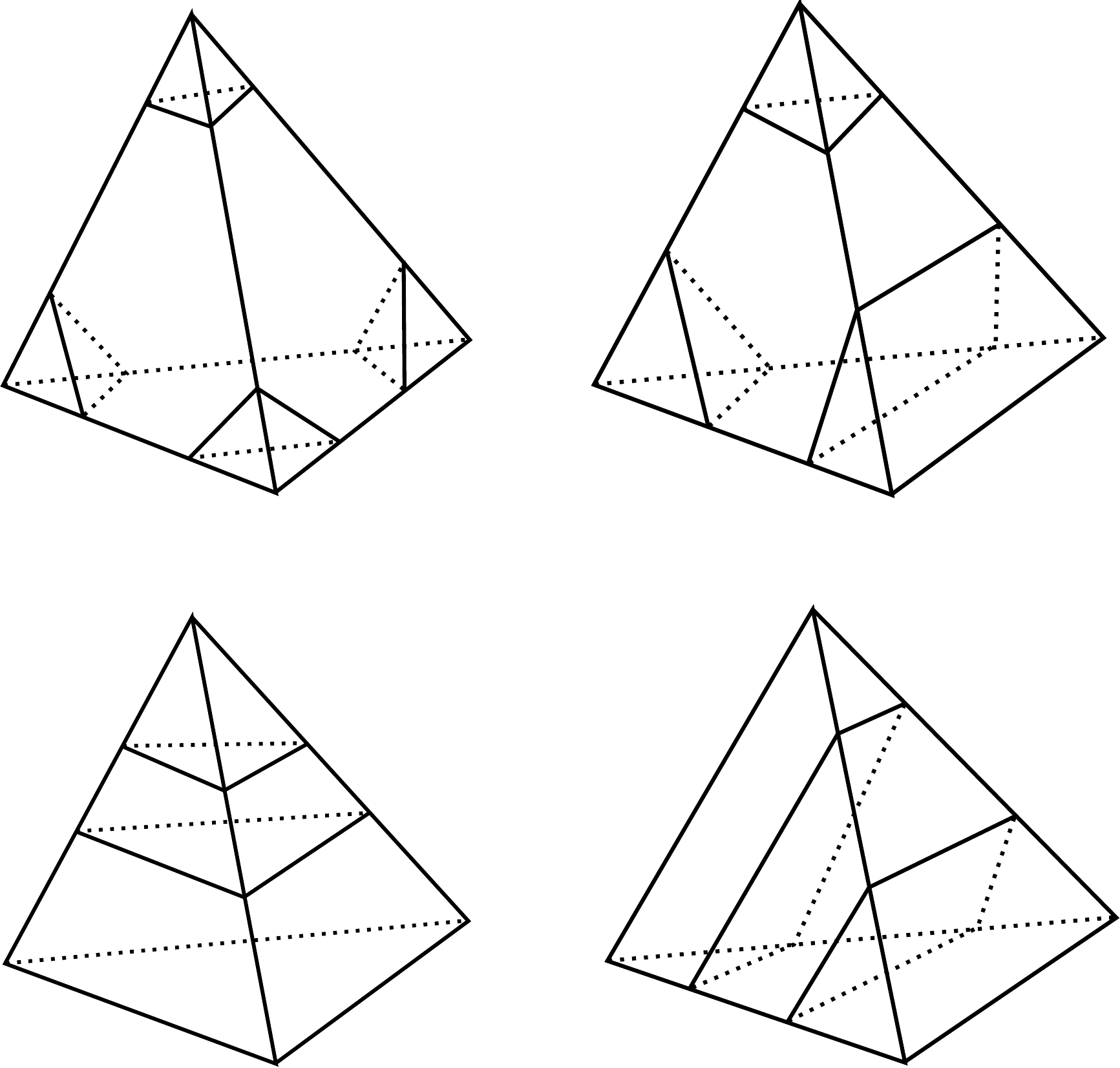}
 \caption{}
  \label{Figure9}
  \end{center}
\end{figure}

\end{proof}

We can now complete the proof of Theorem \ref{almostnormal}.  We will prove the theorem by describing a recursive process that will end when it produces an almost normal surface isotopic to the bridge surface $S_K$.  Recall that we began with a knot $K$ in a closed 3-manifold $M$ with the assumptions that $M$ and $M_K$ are irreducible.  We foliated $M_K$ by copies of the bridge surface $S_K$ with two singular leaves and triangulated $M_K$ so that the annuli $\Gamma$ are normal and the vertices of $\mathcal{T}$ are to one side of $\Gamma$.  Cutting along a maximal family of non-parallel normal 2-spheres tubed to the normal annuli $\Gamma$ we obtained the submanifold $M_0$ of $M_K$.  The surface $S_K$ induces a splitting of $M_0$ into $K_0$-compression bodies $W_0$ and $W_0'$, and $M_0$ is foliated by copies of the bridge surface $S_K$, and where the top (resp. bottom) leaf of the foliation is given by the union of the spine $\Sigma_{0}^{top}$ (resp. $\Sigma_{0}^{bot}$)  of $W_0$ (resp. $W'_0$) and $\Gamma_{0}^{top}$ (resp. $\Gamma_{0}^{bot}$).  Here $\Gamma_{0}^{top}=\Gamma^{top'}$ (resp. $\Gamma_{0}^{bot}=\Gamma^{bot'}$) are the normal annuli in $\partial M_0$.  The triple $(M_0,\Sigma_0,\Gamma_0)$ is the beginning of the recursive process.  Each later step will produce a triple $(M_i,\Sigma_i,\Gamma_i)$ such that $M_i \subset M_{i-1}$ and for each $i$ the surface $S_K$ is a weakly incompressible splitting surface for $M_i$ separating it into two $K_i$-compression bodies $W_i$ and $W'_i$, where $K_i=K\cap M_i$.  The spine $\Sigma_{i}^{top}$ (resp. $\Sigma_{i}^{bot}$)  of $W_i$ (resp. $W'_i$) is contained in some spine for $W$ (resp. $W'$), and $\Gamma_i=\partial M_i-\partial M$ is a pair of collections of normal surfaces $\Gamma_{i}^{top}$ and $\Gamma_{i}^{bot}$ in $M_i$.  

The top (bottom) leaf of a singular foliation $F_i$ is given by the intersection $\Sigma_{i}^{top} \subset \Sigma^{top}$ with $M_i$ (resp. $\Sigma_{i}^{bot} \subset \Sigma^{bot}$ with $M_i$).  It is a 1-complex in $M_i$ properly embedded in $\Gamma_{i}^{top}$. Put $\mathcal{T}^{1}_i$, the part of $\mathcal{T}^1$ lying in $M_{i}-\partial M$, in thin position with respect to $F_i$.  As mentioned earlier, either the arcs of $\mathcal{T}^{1}_i$ all have one endpoint on $\Gamma_{i}^{top}$ and one endpoint on $\Gamma_{i}^{bot}$; or there is some arc that either has both endpoints on $\Gamma_{i}^{top}$ or both endpoints on $\Gamma_{i}^{bot}$.  If there is a thick region of $\mathcal{T}^{1}_i$ in $M_i$ then we are in a position to apply Lemma \ref{Lemma5}.  Otherwise we are in a position to apply either Lemma \ref{Lemma4} or \ref{Lemma3}.  

We will describe the step that takes us from $(M_i,\Sigma_i, \Gamma_i)$ to $(M_{i+1}, \\ \Sigma_{i+1},\Gamma_{i+1})$.  First consider the initial step.  If at the first step we encounter a thick region in $M_0$, then start with a leaf $L_0$ in a thick region of $F_0$ intersecting $\mathcal{T}^{2}$ in normal arcs and simple closed curves as is guaranteed by Claim \ref{claim4}.  Applying Lemma \ref{Lemma5} we obtain a collection $G_0$ of normal surfaces and at most one almost normal surface obtained by compressing $L_0$ to one side.  If $G_0$ contains an almost normal surface and $L_0$ is incompressible above and below then $G_0=L_0$ is an almost normal surface isotopic to a leaf and we are done.  If $G_0$ does not contain an almost normal surface then without loss of generality let $G_0=\Gamma_{1}^{top}$ and proceed as below.  

Henceforth assume without loss of generality that $L_0$ compresses above to give $G_0$.  Otherwise we can invert the picture and declare $\Gamma_{i}^{bot}$ to be the ``top'' leaf.  Since $G_0$ has been obtained by compressing above, Lemma \ref{Lemma6} implies that $G_0$ is incompressible below.  By Lemma \ref{normal} we can isotope the almost normal surface $G_0$ to be normal.  This gives a new collection $\Gamma_{1}^{top}$ of normal surfaces isotopic to $G_0$.  Cut $M_0$ along the collection $\Gamma_{1}^{top}$ and keep the component to the incompressible side below $\Gamma_{1}^{top}$ that contains part of $\partial M_K$.  Call this submanifold $M_1$.  Observe that $\Gamma_{1}^{top} \subset \partial M_1$.  The cores of the tubes of the thick leaf that were compressed to give the almost normal surface $G_0 \simeq \Gamma_{1}^{top}$ form the required 1-complex $\Sigma_{1}^{top}$.  Let $\Sigma_{1}$ denote the pair $\Sigma_{1}^{top}$ and $\Sigma_{1}^{bot}=\Sigma_{0}^{bot}$, and let $\Gamma_1$ denote the pair $\Gamma_{1}^{top}$ and $\Gamma_{1}^{bot}=\Gamma_{0}^{bot}$.  This completes the first step.  

The remainder of the proof falls into the following three cases: \\

\textbf{Case 1)}:  $M_i$ contains a thick region of $\mathcal{T}^{1}_{i}$ with respect to $F_i$. 

In this case using Claim \ref{claim4} start with a leaf $L_i$ in a thick region of the foliation $F_i$ intersecting $\mathcal{T}^2$ in normal arcs and simple closed curves disjoint from the 1-skeleton.  Applying Lemma \ref{Lemma5} we obtain a collection $G_i$ of normal surfaces and at most one almost normal surface obtained by compressing $L_i$ to one side.  Lemma \ref{Lemma6} implies that $G_i$ is incompressible to the opposite side.  If $L_i$ is incompressible then $G_i=L_i$ and since $L_i$ is isotopic to a leaf we are done.  So suppose without loss of generality that $L_i$ is compressible above to give $G_i$.  The cores of the tubes of $L_i$ that are compressed above to give $G_i$ will make up the spine $\Sigma_{i+1}^{top}$.  Let $\Sigma_{i+1}$ denote the pair $\Sigma_{i+1}^{top}$ and $\Sigma_{i+1}^{bot}=\Sigma_{i}^{bot}$.  By Lemma \ref{normal} we can isotope the almost normal surface $G_i$ to give a new collection $\Gamma_{i+1}^{top}$ of normal surfaces.  Cut $M_i$ along the collection $\Gamma_{i+1}^{top}$ and keep the component to the incompressible side below $\Gamma_{i+1}^{top}$ that contains part of $\partial M_K$.  Call this new submanifold $M_{i+1}$.  Let $\Gamma_{i+1}$ denote the pair $\Gamma_{i+1}^{top}$, $\Gamma_{i+1}^{bot}=\Gamma_{i}^{bot}$.

If on the other hand $G_i$ is compressible below then the cores of the tubes of $L_i$ that are compressed below to give $G_i$ will make up the spine $\Sigma_{i+1}^{bot}$.  By Lemma \ref{normal} we can isotope $G_i$ to be normal and call the new collection of normal surfaces $\Gamma_{i+1}^{bot}$.  Let $\Gamma_{i+1}$ denote the pair $\Gamma_{i+1}^{top}=\Gamma_{i}^{top}$, and $\Gamma_{i+1}^{bot}$.  Cut $M_i$ along the collection of normal surfaces $\Gamma_{i+1}^{bot}$ and keep the component to the incompressible side above $\Gamma_{i+1}^{bot}$.  Call this new submanifolds $M_{i+1}$.  This completes the recursive step in this case.  \\

\textbf{Case 2)}:  $M_i$ contains no thick region of $\mathcal{T}_{i}^{1}$ and some arc of $\mathcal{T}_{i}^{1}$ either has both endpoints on $\Gamma_{i}^{top}$ or has both endpoints on $\Gamma_{i}^{bot}$. 

Without loss of generality suppose that there is an arc of $\mathcal{T}^{1}_{i}$ with both endpoints on $\Gamma_{i}^{top}$.  Applying Lemma \ref{Lemma4}, starting with a leaf $L_i$ of $F_i$ near the top singular leaf above all of the minima we obtain an almost normal surface $G_i$ in $M_i$ by compressing the leaf $L_i$ above.  If follows from Lemma \ref{Lemma6} that $G_i$ is incompressible below.  Moreover, $\chi(G_i)=\chi(\Gamma_{i}^{top})-2$.  Using Lemma \ref{Lemma6} isotope the almost normal surface $G_i$ to give a normal surface $\Gamma_{i+1}^{top}$.  Cut $M_i$ along $\Gamma_{i+1}^{top}$ and keep the component to the incompressible side below $\Gamma_{i+1}^{top}$ to obtain the submanifold $M_{i+1}$.  Denote by $\Gamma_{i+1}$ the pair $\Gamma_{i+1}^{top}$ and $\Gamma_{i+1}^{bot}=\Gamma_{i}^{bot}$.  The spine $\Sigma_{i+1}^{top}$ of $M_{i+1}$ consists of the cores of the tubes of $L_i$ that are compressed above to give $G_i$. Denote by $\Sigma_{i+1}$ the pair $\Sigma_{i+1}^{top}$ and $\Sigma_{i+1}^{bot}=\Sigma_{i}^{bot}$.  

In both Cases 1) and 2) the new surface $G_i$ isotopic to $\Gamma_{i+1}^{top}$ (resp. $\Gamma_{i+1}^{bot}$) in $M_i$ is not parallel as a normal surface to the normal surfaces $\Gamma_{i}^{top}$ (resp. $\Gamma_{i}^{bot}$).  The reason depends on whether Lemma \ref{Lemma5} or Lemma \ref{Lemma4} was applied.  If the surface, without loss of generality say $\Gamma_{i+1}^{top}$, comes from compressing a thick leaf via Lemma \ref{Lemma5} then there is a subarc of $\mathcal{T}^{1}$ lying between $\Gamma_{i}^{top}$ and $\Gamma_{i+1}^{top}$ with both ends on $\Gamma_{i+1}^{top}$.  Hence $\Gamma_{i}^{top}$ and $\Gamma_{i+1}^{top}$ are not parallel.  If $\Gamma_{i+1}^{top}$ comes via Lemma \ref{Lemma4} then $\chi(\Gamma_{i+1}^{top})=\chi(\Gamma_{i}^{top})-2$ so the surfaces are not parallel.  If $\Gamma_{i}^{top}$ and $\Gamma_{j}^{top}$ are parallel then all leaves $\Gamma_{k}^{top}$ where $i \leq k \leq j$ are parallel as well.  In particular, then $\Gamma_{i+1}^{top}$ is parallel to $\Gamma_{i}^{top}$ which cannot happen as we have just seen above.  Therefore it follows that $\Gamma_{i}^{top}$ is non-parallel to $\Gamma_{j}^{top}$ for all $i<j$. \\ 

\textbf{Case 3)}:  $M_i$ contains no thick region of $\mathcal{T}_{i}^{1}$ and each arc of $\mathcal{T}_{i}^{1}$ has one endpoint on $\Gamma_{i}^{top}$ and one endpoint on $\Gamma_{i}^{bot}$.

In this case by Lemma \ref{Lemma3} $M_i$ is a product.  Suppose $i\neq 0$.  The submanifold $\overline{M_i}=M_i \cup N(K_i)$ is a product as well, and has the surface $S$ as a Heegaard surface that gives an irreducible Heegaard splitting of $\overline{M_i}$.  By \cite{STh} it follows that  the splitting surface is isotopic to $\Gamma_{i}^{top}$ and  $\Gamma_{i}^{bot}$, one of which is in turn isotopic to the almost normal surface $G_{i-1}$ and so we are done. 

If $i=0$ then the argument above shows that the surface $S_K$ consists of a collection of annuli.  However the surface $S_K$ is a bridge surface for $K$ so it is connected.  Therefore $S_K$ consists of one annulus and $K$ must be the unknot.

It follows from a well known result of Haken that there are only a finite number of non-parallel, disjoint, normal surfaces in $M_K$. See \cite{H}.  Therefore we will only have to apply Lemmas \ref{Lemma4} and Lemma \ref{Lemma5} a finite number of times before we either reach a situation where we apply Lemma \ref{Lemma3} and obtain an almost normal surface isotopic to the bridge surface $S_K$ or we exhaust all of the non-parallel, disjoint, normal surfaces in $M_K$ and we obtain an almost normal surface isotopic to $S_K$.  

\end{proof}

\bibliographystyle{abbrv}

 \end{document}